\newtheorem{thm}{Theorem}[section]
\newtheorem{lem}[thm]{Lemma}
\newtheorem{rem}[thm]{Remark}
\numberwithin{equation}{section}
\begin{document}

\title{\bf The Asymptotic Behavior of Rarely Visited Edges of the Simple  Random Walk}

\author{Ze-Chun Hu$^a$, Xue Peng$^{a,}$\thanks{Corresponding author}\ , Renming Song$^b$, Yuan Tan$^a$\\ \\ \\
 {\small $^a$ College of Mathematics, Sichuan  University, Chengdu, 610065,  China}\\
 {\small zchu@scu.edu.cn, pengxuemath@scu.edu.cn, 630769091@qq.com}\\ \\
 {\small $^b$ Department of Mathematics, University of Illinois Urbana-Champaign, Urbana, IL 61801, USA}\\
{\small rsong@illinois.edu}}

\maketitle

\begin{abstract}
 In this paper, we study the asymptotic behavior of the number of rarely visited edges (i.e., edges that are visited only once) of
 a simple symmetric random walk on $\mathbb{Z}$. Let $\alpha(n)$ be the number of
 rarely visited edges up to time $n$.
 First we evaluate $\mathbb{E}(\alpha(n))$, show that $n\to  \mathbb{E}(\alpha(n))$ is non-decreasing in $n$ and that $\lim\limits_{n\to \infty}\mathbb{E}(\alpha(n))=2$. Then we study the asymptotic behavior of $\mathbb{P} (\alpha(n)>a(\log n)^2)$
 for any $a>0$ and use it to show that there exists a constant $C\in (\frac{1}{32}, \frac{1}{2}]$ such that $\limsup\limits_{n\to \infty}\frac{\alpha(n)}{(\log n)^2}=C$ almost surely.
 \end{abstract}

\smallskip

\noindent {\bf Keywords and phrases: } Random walk, local time, rarely visited edges

\smallskip

\noindent {\bf 2020 MR Subject Classification (2020)}\quad 60F15; 60J55

\section{Introduction and the main results}

Suppose that  $\{S_n\}_{n\geq 0}$ is a simple  symmetric random walk
on $\mathbb{Z}$ with $S_0=0$, defined on the probability space $(\Omega,\mathscr{F},\mathbb{P})$.
Let $X_n:=S_n-S_{n-1}, n\geq 1$. Then $\{X_n, n\ge 1\}$ are \ i.i.d. with $\mathbb{P}(X_1=1)=\mathbb{P}(X_1=-1)=\frac{1}{2}$.

For $y\in\mathbb{Z}$, we use $\xi(y,n):=\#\{0\leq k\leq n: S_k=y\}$ to denote
the time spent at $y$ by  $\{S_m\}_{m\geq 0}$ up to time $n$.
Here and throughout this paper, $\# D$ denotes the cardinality of the set $D$.
A site $x\in\mathbb{Z}$ is called a {\it favorite (most visited) site} of  $\{S_m\}_{m\geq 0}$ up to time $n$ if
$$
\xi(x,n)=\max_{y\in{\mathbb{Z}}}\xi(y,n).
$$

For $y\in\mathbb{Z}$, we use $\langle y, y+1\rangle$ to denote the edge between the sites $y$ and $y+1$. The numbers of upcrossings and downcrossings  of $y\in \mathbb{Z}$ by $\{S_m\}_{m\geq 0}$ up to time $n\geq 1$ are defined by
\begin{align*}
L^U(y,n):=\#\{0<k\leq n: S_k=y, S_{k-1}=y-1\},\\
L^D(y,n):=\#\{0<k\leq n: S_k=y, S_{k-1}=y+1\}.
\end{align*}
Set
$$
L(y,n):=L^U(y+1,n)+L^D(y,n).
$$
Then $L(y,n)$ is the number of times that  $\{S_m\}_{m\geq 0}$ visits the edge $\langle y, y+1\rangle$ up to time $n$.
An edge $\langle x, x+1\rangle $ is called a {\it favorite edge} of $\{S_m\}_{m\geq 0}$ up to time $n$ if
$$
L(x,n)=\sup_{y\in{\mathbb{Z}}}L(y,n).
$$

The study of favorite sites of
random walks was initiated by Erd\"{o}s and R\'{e}v\'{e}sz \cite{ER84}. Since then, this
topic  has been intensively studied,  see Bass \cite{B22},
Bass and Griffin \cite{BG85},
Ding and Shen \cite{DS18}, Erd\"{o}s and R\'{e}v\'{e}sz \cite{ER87, ER91},  Hao \cite{H23-b}, Hao et al. \cite{HHMS22, HHMS23},  Shi and T\'{o}th \cite{ST00}, T\'{o}th \cite{To01}, T\'{o}th and Werner \cite{TW97} and the references therein.

A site  $x\in\mathbb{Z}$
 is called a {\it rarely visited site} of  $\{S_m\}_{m\geq 0}$ up to time $n$ if $\xi(x,n)=1$.
Compared to favorite sites,  there are only a few papers on rarely visited
sites, see Major \cite{Ma88}, Newman \cite{Ne84}  and T\'{o}th \cite{To96}.
 Following R\'{e}v\'{e}sz \cite{Re90}, we use $f_1(n)$ to denote the number of
 rarely visited sites up to time $n$, i.e.,
\begin{align}\label{e:raresites}
f_1(n):=\#\{x\in\mathbb{Z}: \xi(x,n)=1\}.
\end{align}
Newman \cite{Ne84} proved that $\mathbb{E}(f_1(n))=2$, for all $n\geq 1$. Major \cite{Ma88} proved that there exists a constant $C\in (0,\infty)$ such that $\limsup\limits_{n\to\infty}\frac{f_1(n)}{(\log n)^2}=C$ almost surely.

An edge $\langle x, x+1\rangle$ is called a {\it rarely visited edge}  of  $\{S_m\}_{m\geq 0}$ up to time $n$ if $L(x,n)=1$.
So far it seems that no one has studied rarely visited edges. The purpose of this paper is to study the  asymptotic behavior of the number of rarely visited edges.
Define
\begin{equation}\label{eq: defn edge}
\mathcal{A}_n:=\{\langle x,x+1\rangle
: L(x,n)=1\},\quad \alpha(n):=\#\mathcal{A}_n, \,\,n\geq 1.
\end{equation}
Then $\mathcal{A}_n$ is the collection of all the rarely visited edges of $\{S_m\}_{m\geq 0}$ up to time $n$, and $\alpha(n)$ is the number of  rarely visited edges of  $\{S_m\}_{m\geq 0}$ up to time $n$.
The main results of this paper are as follows:

\begin{thm}\label{thm: expectation of alpha(n)}
	(i) $\mathbb{E}(\alpha(1))=1$ and for all $n\geq 1$,
	\begin{equation}
		\left\{\begin{array}{llll}
			\mathbb{E}(\alpha(n+1)) & = & \mathbb{E}(\alpha(n)), & \text{if}\ n\ \text{is odd},\\
			\mathbb{E}(\alpha(n+1)) & = & \mathbb{E}(\alpha(n))+2\cdot \frac{(n-1)!!}{(n+2)!!}, & \text{if}\  n\ \text{is even}.
		\end{array}\right.
	\end{equation}
(ii) $\lim\limits_{n\rightarrow \infty}\mathbb{E}(\alpha(n))=2$.
\end{thm}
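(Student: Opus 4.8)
The plan is to obtain a closed form for $\mathbb{E}(\alpha(n))$ as a self-convolution of the one-sided staying-probabilities of the walk, and then to read off both the recursion in (i) and the limit in (ii) from its generating function. The first step is to exploit the reflection symmetry $S\mapsto -S$, which preserves the law of the walk and maps the edge $\langle y,y+1\rangle$ to $\langle -y-1,-y\rangle$, hence sets up a bijection between the right edges $(y\ge 0)$ and the left edges $(y\le -1)$. This yields
$$\mathbb{E}(\alpha(n))=2\sum_{y\ge 0}\mathbb{P}(L(y,n)=1)=2\sum_{z\ge 1}\mathbb{P}(L(z-1,n)=1).$$

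Next I would characterize the right rarely visited edges. Writing $U(y,n),D(y,n)$ for the up- and down-crossings of $\langle y,y+1\rangle$, the net-flux identity $U(y,n)-D(y,n)=\mathbf{1}_{\{S_n\ge y+1\}}$ (valid for $y\ge 0$ since $S_0=0$) shows that $L(y,n)$ is odd precisely when $S_n\ge y+1$, and that $L(y,n)=1$ holds iff the walk makes its first passage to $z:=y+1$ by time $n$ and never returns to $z-1$ afterwards; here the first passage to $z$ is automatically an upcrossing because $S_0=0<z$. Applying the strong Markov property at the first-passage time $T_z$ then gives
$$\mathbb{P}(L(z-1,n)=1)=\sum_{m=1}^{n}\mathbb{P}(T_z=m)\,R_{n-m},\qquad R_j:=\mathbb{P}(S_1\ge 0,\dots,S_j\ge 0).$$

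Summing over $z$ and noting that for each fixed $m$ at most one level is reached for the first time at $m$, the disjoint union $\bigcup_{z\ge1}\{T_z=m\}$ is exactly the event that $m$ is a strict ascending ladder epoch, whose probability equals $P_m^{+}:=\mathbb{P}(S_1>0,\dots,S_m>0)$ by time reversal $\hat S_j=S_m-S_{m-j}$. Since $P_m^{+}=\tfrac12 R_{m-1}$ for the simple walk (condition on the first step being up), this collapses to the clean self-convolution
$$\mathbb{E}(\alpha(n))=\sum_{j=0}^{n-1}R_j\,R_{n-1-j},\qquad R_{2k-1}=R_{2k}=\binom{2k}{k}2^{-2k},\ \ R_0=1.$$
I expect this probabilistic reduction — establishing the ladder/reversal identity, the relation $P_m^+=\tfrac12R_{m-1}$, and the parity bookkeeping of $R_j$ — to be the main obstacle; everything afterwards is generating-function algebra.

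Finally, with $\rho(s):=\sum_{j\ge 0}R_j s^j$ I would compute the closed form $\rho(s)=s^{-1}\bigl(\sqrt{(1+s)/(1-s)}-1\bigr)$, so that $\sum_{n\ge 1}\mathbb{E}(\alpha(n))\,s^{n-1}=\rho(s)^2$ and therefore $\mathbb{E}(\alpha(n+1))-\mathbb{E}(\alpha(n))=[s^n]\bigl((1-s)\rho(s)^2\bigr)$. A short simplification gives
$$(1-s)\rho(s)^2=\frac{2}{s^2}\bigl(1-\sqrt{1-s^2}\bigr)=\sum_{\ell\ge 0}C_\ell\,4^{-\ell}s^{2\ell},$$
with $C_\ell$ the Catalan numbers. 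Extracting coefficients yields $\mathbb{E}(\alpha(n+1))-\mathbb{E}(\alpha(n))=0$ for odd $n$ and $=C_{n/2}4^{-n/2}=2\,(n-1)!!/(n+2)!!$ for even $n$, which is exactly (i) (and shows monotonicity, as all increments are nonnegative). Telescoping from $\mathbb{E}(\alpha(1))=1$ and using $\sum_{\ell\ge 0}C_\ell 4^{-\ell}=2$ (the Catalan generating function at $1/4$) gives $\lim_{n}\mathbb{E}(\alpha(n))=1+\sum_{\ell\ge 1}C_\ell 4^{-\ell}=2$, which is (ii); alternatively the limit follows from $(1-s)\rho(s)^2\to 2$ as $s\uparrow 1$ together with a Tauberian theorem.
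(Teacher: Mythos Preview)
Your argument is correct and takes a genuinely different route from the paper. The paper never writes down a closed form for $\mathbb{E}(\alpha(n))$; instead it compares $\alpha(n+1)$ to $\alpha(n)$ by conditioning on the first step $X_1$, introducing $\tilde\alpha(n)$ (the count for $\{S_k:1\le k\le n+1\}$), and showing via a tailored last-visit reflection bijection that most terms cancel, leaving precisely $\mathbb{E}(\alpha(n+1))-\mathbb{E}(\alpha(n))=\mathbb{P}_0(S_j\ge 0,\,0\le j\le n,\,S_n=0)$, which is then evaluated by the reflection principle. Your approach instead produces the explicit formula $\mathbb{E}(\alpha(n))=\sum_{j=0}^{n-1}R_jR_{n-1-j}$ via strong Markov at the first-passage times, the ladder-epoch/time-reversal identity, and the relation $P_m^{+}=\tfrac12 R_{m-1}$; the recursion and the limit then drop out of the generating-function identity $(1-s)\rho(s)^2=\tfrac{2}{s^2}(1-\sqrt{1-s^2})$. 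The paper's method is more elementary (pure path bijections, no generating functions or ladder theory), while yours is more structural: it explains \emph{why} the increments are Catalan numbers, gives an exact expression for $\mathbb{E}(\alpha(n))$ at every $n$, and makes both monotonicity and the limit $2$ transparent without a separate telescoping computation.
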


\begin{thm}\label{thm: inequality of alpha(n)}
For all $a>0$ and $\varepsilon>0$, there exists an $N_0=N_0(a,\varepsilon)$ such that for all $n>N_0$,
$$
n^{-2a-\varepsilon  }<\mathbb{P}\left(\alpha(n)>a(\log n)^2\right)<n^{-2a+\varepsilon}.
$$
\end{thm}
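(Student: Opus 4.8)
The plan is to turn $\alpha(n)$ into a one-sided path functional governed by ``survival above a level'' events, and then to prove the two inequalities by a first/factorial-moment (union) bound for the upper estimate and an explicit trajectory construction for the lower estimate.

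\noindent\textbf{Reduction.} Across any edge $\langle y,y+1\rangle$ the net crossing number is $L^U(y+1,n)-L^D(y,n)=\mathbf 1_{\{S_n\ge y+1\}}-\mathbf 1_{\{0\ge y+1\}}\in\{-1,0,1\}$, so $L(y,n)$ has the parity of this flux and $L(y,n)=1$ forces the flux to be $\pm1$ and the rarer crossing direction to be absent. Hence, on $\{S_n=s>0\}$ every rarely visited edge has the form $\langle \ell-1,\ell\rangle$ with $1\le\ell\le s$, and it is rarely visited iff the walk never steps from $\ell$ to $\ell-1$, i.e.\ iff after the first passage time $T_\ell$ the walk stays $\ge\ell$. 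Writing $F_\ell=\min_{T_\ell\le j\le n}S_j\,(\le\ell)$ this gives $\alpha(n)=\#\{\ell\ge1:F_\ell=\ell\}$; by symmetry the case $s<0$ is identical and $\alpha(n)=0$ when $s=0$. I will call these $\ell$ \emph{good levels}; they are exactly the levels whose first passage is never undone, and they cluster just below $|S_n|$. This reduction, together with Theorem \ref{thm: expectation of alpha(n)}, is the starting point.

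\noindent\textbf{Upper bound.} I would estimate $\mathbb P(\alpha(n)\ge m)$ for $m=\lceil a(\log n)^2\rceil$ by a factorial-moment bound over the possible good levels. The elementary brick is, by the strong Markov property at $T_\ell$ and the reflection principle, $\mathbb P(F_\ell=\ell\mid T_\ell=t)=p(n-t)$ with $p(u)=\mathbb P(\text{a SRW from }0\text{ stays}\ge0\text{ for }u\text{ steps})\sim c\,u^{-1/2}$. The event that $\ell_1<\dots<\ell_m$ are all good forces the walk, after its first passage times $t_1<\dots<t_m$, to stay above the rising staircase $\ell_1<\dots<\ell_m$ until time $n$; decomposing at the $t_i$ and applying meander estimates segment by segment produces a product of such survival factors. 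Summing over admissible $(\ell_i,t_i)$ and optimizing the time allocation should yield $n^{-2a+\varepsilon}$, the exponent $2a$ arising from the variational optimum of this product (with the one-sided survival exponent $\tfrac12$ producing the factor $2$).

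\noindent\textbf{Lower bound and main obstacle.} For the matching lower bound I would exhibit one family of trajectories realizing $\ge m$ good levels: crucially \emph{not} a monotone run (which costs $2^{-m}\ll n^{-2a}$), but a trajectory whose approach to its top $\sim m$ levels happens only during the last portion of $[0,n]$, with each successive top first passage made permanent; the nested meander dual to the upper-bound optimizer, estimated by an $h$-transform/change-of-measure computation, should carry probability $\ge n^{-2a-\varepsilon}$. The heart of the difficulty is pinning the sharp constant $2$: both directions reduce to a product of survival/meander probabilities over the $\sim a(\log n)^2$ commitments, and one must show the upper-bound optimization and the lower-bound construction meet at the \emph{same} exponent. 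This needs (i) first-passage and staying-positive asymptotics with explicit constants, (ii) enough control of the correlations among good levels that the union bound is not lossy on the exponential scale, and (iii) handling the conditioning forced by working at a \emph{fixed} time $n$ rather than at a first passage (so that the underlying crossing process is run for time $n$, not to a stopping level); this last point is the technically delicate step.
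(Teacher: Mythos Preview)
Your reduction to ``good levels'' is correct and is essentially the decomposition the paper uses via the events $D_1,C_2,D_2$. The gap is in the upper bound.

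You propose to control $\mathbb P(\alpha(n)\ge m)$, $m=\lceil a(\log n)^2\rceil$, by summing over all $m$-tuples of good levels; that sum is exactly the $m$-th factorial moment $\mathbb E\binom{\alpha(n)}{m}$. But this quantity is \emph{not} small: with the correct segment brick (see below) one gets $\mathbb E\binom{\alpha(n)}{m}\asymp(\tfrac12\log n)^{m}$, which for $m=a(\log n)^2$ is superpolynomially large, so the union bound is vacuous. (You can also see this a posteriori from the theorem: already the single term $\mathbb P(\alpha(n)\ge 2m)\binom{2m}{m}\approx n^{-4a}\,4^{a(\log n)^2}$ explodes.) The paper's key device, which your proposal is missing, is to apply Markov's inequality to $\binom{\alpha(n)}{k}$ with $k=\lfloor 2a\log n\rfloor$, i.e.\ a factorial moment of order $\sim\log n$, not $\sim(\log n)^2$. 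One proves the moment estimate $\mathbb E\binom{\alpha(n)}{k}\le[(\tfrac12+\varepsilon)\log n]^{k}$ (Lemma~\ref{lem: main lem}) and then optimizes $k$; the choice $k=2a\log n$ is what makes $\mathbb E\binom{\alpha(n)}{k}\big/\binom{a(\log n)^2}{k}\le n^{-2a+\varepsilon}$, and this optimization---not a ``survival exponent $\tfrac12$''---is the source of the constant $2$.

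A related issue is your ``elementary brick''. Between two consecutive good levels with first-passage times $t_i<t_{i+1}$ the walk must stay strictly above $S_{t_i}$ \emph{and} reach a fresh maximum at $t_{i+1}$; this is the event $C_2(t_i,t_{i+1})$ with probability $\sim\frac{1}{2(t_{i+1}-t_i)}$, not the stay-nonnegative probability $p(u)\sim cu^{-1/2}$. The $u^{-1/2}$ factors only appear at the two ends (the $D_1,D_2$ events) and integrate to a bounded constant; the $(k-1)$ interior segments contribute the product of $C_2$ probabilities, whose sum over admissible gaps gives $(\tfrac12\log n)^{k-1}$.

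For the lower bound you suggest an explicit trajectory construction; the paper instead reuses the factorial-moment estimate: with $k'=\lfloor 2a(1+\delta)\log n\rfloor$ it shows that the contributions of $\{\alpha(n)\le a(\log n)^2\}$ and of $\{\alpha(n)\ge a(1+2\delta)(\log n)^2\}$ to $\mathbb E\binom{\alpha(n)}{k'}$ are each at most $\tfrac13\mathbb E\binom{\alpha(n)}{k'}$, so the middle range carries at least $\tfrac13$ of the moment, and dividing by $\binom{\lfloor a(1+2\delta)(\log n)^2\rfloor}{k'}$ gives $\mathbb P(\alpha(n)>a(\log n)^2)\ge n^{-2a-\varepsilon}$. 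Your construction idea is not obviously wrong, but it is a different route and you would still need the matching moment input to calibrate it; as written it does not identify the exponent.
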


\begin{thm}\label{thm: limitation of alpha(n)}
There exists a constant
$C\in (\frac{1}{32}, \frac{1}{2}]$ such that
$$
\mathbb{P}\left(\limsup\limits_{n\rightarrow\infty}\frac{\alpha(n)}{(\log n)^2}=C\right)=1.
$$
\end{thm}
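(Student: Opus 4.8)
The plan is to combine a zero--one law, which forces $\limsup_{n\to\infty}\alpha(n)/(\log n)^2$ to be an almost surely deterministic constant $C\in[0,\infty]$, with the two-sided estimate of Theorem~\ref{thm: inequality of alpha(n)}, which confines $C$ to the open interval $(0,\infty)$. The upper half of the confinement is a routine application of the first Borel--Cantelli lemma, while the lower half --- showing $C>0$ --- is where essentially all of the difficulty lies.

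First I would show that
$$
C:=\limsup_{n\to\infty}\frac{\alpha(n)}{(\log n)^2}
$$
is almost surely constant by exhibiting a bounded-difference property of $\alpha(n)$ under finite perturbations of the increments. If two increment sequences $(X_i)$ and $(X_i')$ agree for all $i>m$, then the two paths coincide up to a spatial shift $d$ for every time $\ge m$; since a spatial shift preserves the entire edge-local-time profile, the values $\alpha(n)$ and $\alpha'(n)$ can differ only on the at most $2m$ edges crossed during the first $m$ steps of the two paths, so that $|\alpha(n)-\alpha'(n)|\le 2m$ for all $n\ge m$. Dividing by $(\log n)^2\to\infty$, the $\limsup$ is unchanged, hence it is invariant under altering finitely many coordinates and is therefore measurable with respect to the tail $\sigma$-field of $\{X_i\}$. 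Kolmogorov's zero--one law then shows that $C$ is almost surely a constant in $[0,\infty]$; in particular, for each fixed $c\ge 0$ the event $\{C\ge c\}$ is a tail event and so has probability $0$ or $1$.

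For the finiteness of $C$, fix $a>\tfrac12$ and choose $\varepsilon>0$ with $2a-\varepsilon>1$. By the upper estimate of Theorem~\ref{thm: inequality of alpha(n)}, $\mathbb{P}(\alpha(n)>a(\log n)^2)<n^{-2a+\varepsilon}$ for all large $n$, and $\sum_n n^{-(2a-\varepsilon)}<\infty$. The first Borel--Cantelli lemma gives $\alpha(n)\le a(\log n)^2$ for all large $n$, almost surely, so $C\le a$; letting $a\downarrow\tfrac12$ yields $C\le\tfrac12<\infty$.

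The lower bound $C>0$ is the main obstacle. The lower estimate of Theorem~\ref{thm: inequality of alpha(n)} gives $\mathbb{P}(\alpha(n)>c(\log n)^2)>n^{-2c-\varepsilon}$, and for $c$ small enough the series $\sum_n n^{-2c-\varepsilon}$ diverges, so there is enough total probability for the event to recur; the trouble is that the events $\{\alpha(n)>c(\log n)^2\}$ at different $n$ are strongly dependent, so the second Borel--Cantelli lemma is not directly available. Because $\{C\ge c\}$ is already known to be a tail event, and since $\{\alpha(n)>c(\log n)^2\ \text{i.o.}\}\subseteq\{C\ge c\}$, it suffices to prove that this recurrence event has \emph{positive} probability, after which the zero--one law upgrades it to probability one. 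The natural route is to replace the dependent events by events localized on disjoint time windows: choosing a sparse increasing sequence of levels and applying the strong Markov property at their successive first-passage times, one can arrange that during the $k$-th block the walk explores fresh territory beyond its previous record, so that the number of rarely visited edges created among these new edges depends only on the increments of that block and, by spatial homogeneity, the resulting events $B_k$ are independent across $k$. One must then verify that this localization preserves the polynomial order of the probabilities, i.e.\ that a single fresh excursion still produces $c(\log\cdot)^2$ rarely visited edges with probability of order $n^{-2c-\varepsilon}$, so that $\sum_k\mathbb{P}(B_k)=\infty$; if exact independence proves awkward, the Kochen--Stone lemma may be used instead, requiring only a correlation bound $\mathbb{P}(A_j\cap A_k)\lesssim\mathbb{P}(A_j)\,\mathbb{P}(A_k)$. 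I expect the genuine difficulties to be controlling this dependence of the ``many rarely visited edges'' event across times and reconciling the random block lengths with the deterministic normalization $(\log n)^2$; this is where the bulk of the work will lie.
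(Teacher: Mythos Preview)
Your plan is correct and matches the paper's proof almost exactly: the zero--one law (the paper truncates the first $\sqrt{f(n)}$ steps rather than perturbing finitely many increments, but the effect is identical), the upper bound via the first Borel--Cantelli lemma giving $C\le\tfrac12$, and the lower bound via independent blocks anchored at successive first-passage times. For the block construction you left as an outline, the paper takes the $k$ steps following $\sigma_{k^2}$; since $\sigma_{(k+1)^2}-\sigma_{k^2}\ge 2k+1>k$ the blocks are disjoint and the associated events are genuinely independent (so Kochen--Stone is not needed), the count of fresh rarely visited edges in a block has the law of $\alpha^+(k)$ and obeys the one-sided analogue of Theorem~\ref{thm: inequality of alpha(n)} that falls out of its proof, and the conversion from $(\log k)^2$ to $(\log n)^2$ uses $\sigma_{k^2}+k<k^8$ eventually a.s., yielding $C\ge 1/128$.
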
	

From Theorem \ref{thm: expectation of alpha(n)}, we can see that, unlike the result that
the expected number $\mathbb{E}(f_1(n))$ of rarely visited sites is equal to 2 for all $n\geq 1$,
the expected number of rarely visited edges $\mathbb{E}(\alpha(n))$ increases
with $n$ and $\lim_{n\to\infty}\mathbb{E}(\alpha(n))=2$.
Theorem \ref{thm: inequality of alpha(n)} and Theorem \ref{thm: limitation of alpha(n)}
imply that the asymptotic behavior of rarely visited edges is similar to that of  rarely visited sites.

\begin{rem} Related to the results above, we think the following
problems are worth studying in the future:
\begin{enumerate}[(1)]
\item What is the exact value of the constant $C$ in Theorem \ref{thm: limitation of alpha(n)}?

\item
Is  the value of the constant $C$ in Theorem \ref{thm: limitation of alpha(n)}
the same as that of the corresponding constant in the case of rarely visited sites?

\end{enumerate}
\end{rem}

The rest of the paper is organized as follows. In Section 2, we give the proof of Theorem \ref{thm: expectation of alpha(n)}. In Section 3, the proofs of Theorems \ref{thm: inequality of alpha(n)} and \ref{thm: limitation of alpha(n)} will be given.

\section{Proof of Theorem \ref{thm: expectation of alpha(n)}}\setcounter{equation}{0}

Our proof of Theorems \ref{thm: expectation of alpha(n)} is inspired by Newman \cite{Ne84}. We spell out the details for the reader's convenience.
We will point out the main modifications in Remark \ref{rem-2.1} below.

Without loss of generality, for the proof of Theorem \ref{thm: expectation of alpha(n)}, we can assume that
$$
\Omega:=\{\omega=(\omega_0, \omega_1,\omega_2,\dots): \omega_n\in\mathbb{Z},\,\, \forall n\ge0, \mbox{ and } |\omega_n-\omega_{n-1}|=1,\,\, \forall n\geq 1 \}.
$$
Let $\mathscr{F}$ be the $\sigma$-field on $\Omega$ generated by
all the cylinder sets.
For $n\ge 0$, $x\in \mathbb{Z}$ and $x_0, x_1, \dots, x_n\in \mathbb{Z}$ satisfying $|x_k-x_{k-1}|=1$ for all $k=1, \dots, n$, we define a probability measure $\mathbb{P}_x$ on $(\Omega, \mathscr{F})$ by
$$
\mathbb{P}_x(\omega: \omega_0=x_0,\omega_1=x_1,,\dots,\omega_n=x_n):=\frac{1}{2^n}\delta_x(x_0).
$$
Let
$$
S_n(\omega):=\omega_n, \,\, \forall n\ge 0, \quad X_0:=S_0, \,\,X_n=S_n-S_{n-1}, \,\, \forall n\ge 1.
$$
Then under $\mathbb{P}_x$, $\{S_n\}_{n\ge 0}$ is a simple symmetric random walk on $\mathbb{Z}$ with $S_0=x$,
and $\{X_n\}_{n\ge 1}$ are
i.i.d. random variables with
$$
\mathbb{P}_x(X_1=1)=\mathbb{P}_x(X_1=-1)=\frac{1}{2}.
$$
$\mathbb{P}_0$ is the probability measure $\mathbb{P}$ of Section 1.
We will use $\mathbb{E}_x$ to denote
the expectation with respect to $\mathbb{P}_x$.

\begin{proof}[\bf Proof of Theorem \ref{thm: expectation of alpha(n)}]
 (i) Obviously, we have $\mathbb{E}_0(\alpha(1))=1$.

 Let $\tilde{\alpha}(n)$ be the number of
 rarely visited edges of the random walk $\{S_k, 1\leq k\leq n+1\}$.
 Since $X_1,X_2,\dots,X_{n+1}$ are i.i.d., we have
\begin{align}\label{proof-thm1.1-a}
		\mathbb{E}_0(\tilde{\alpha}(n))&=\mathbb{P}_0(X_1=1)\mathbb{E}_0(\tilde{\alpha}(n)|X_1=1)
+\mathbb{P}_0(X_1=-1)\mathbb{E}_0(\tilde{\alpha}(n)|X_1=-1)\notag\nonumber\\
&=\frac{1}{2}\mathbb{E}_1(\alpha(n))+\frac{1}{2}\mathbb{E}_{-1}(\alpha(n))\nonumber\\
&=\frac{1}{2}\mathbb{E}_0(\alpha(n))+\frac{1}{2}\mathbb{E}_{0}(\alpha(n))\nonumber\\
&=\mathbb{E}_0(\alpha(n)).
	\end{align}
By the definitions of $\alpha(n+1)$ and $\tilde{\alpha}(n)$, we have
\begin{align*}
\mathbb{E}_0(\alpha(n+1))&=\mathbb{E}_0(\alpha(n+1);X_1=1)+\mathbb{E}_0(\alpha(n+1);X_1=-1)\nonumber\\
&=[\mathbb{E}_0(\alpha(n+1);L(0,n+1)=1,X_1=1)
+\mathbb{E}_0(\alpha(n+1);L(0,n+1)=2,X_1=1)\nonumber\\
&\quad\quad+\mathbb{E}_0(\alpha(n+1);L(0,n+1)\geq 3,X_1=1)]\nonumber\\
&\quad+[\mathbb{E}_0(\alpha(n+1);L(-1,n+1)=1,X_1=-1)
+\mathbb{E}_0(\alpha(n+1);L(-1,n+1)=2,X_1=-1)\nonumber\\
&\quad\quad +\mathbb{E}_0(\alpha(n+1);L(-1,n+1)\geq 3,X_1=-1)]\nonumber\\
&=[\mathbb{E}_0(\tilde{\alpha}(n)+1;L(0,n+1)=1,X_1=1)
+\mathbb{E}_0(\tilde{\alpha}(n)-1;L(0,n+1)=2,X_1=1)\nonumber\\
&\quad\quad+\mathbb{E}_0(\tilde{\alpha}(n);L(0,n+1)\geq 3,X_1=1)]\nonumber\\
&\quad+[\mathbb{E}_0(\tilde{\alpha}(n)+1;L(-1,n+1)=1,X_1=-1)
+\mathbb{E}_0(\tilde{\alpha}(n)-1;L(-1,n+1)=2,X_1=-1)\nonumber\\
&\quad\quad +\mathbb{E}_0(\tilde{\alpha}(n);L(-1,n+1)\geq 3,X_1=-1)]\nonumber\\
&=\mathbb{E}_0(\tilde{\alpha}(n))+[\mathbb{P}_0(L(0,n+1)=1, X_1=1)-\mathbb{P}_0(L(0,n+1)=2,X_1=1)]\notag\\
&\quad+[\mathbb{P}_0(L(-1,n+1)=1, X_1=-1)-\mathbb{P}_0(L(-1,n+1)=2,X_1=-1)],
\end{align*}
which together with (\ref{proof-thm1.1-a}) implies that
\begin{align}\label{proof-thm1.1-b}
&\mathbb{E}_0(\alpha(n+1)-\alpha(n))\nonumber\\
&=[\mathbb{P}_0(L(0,n+1)=1, X_1=1)-\mathbb{P}_0(L(0,n+1)=2,X_1=1)]\notag\\
&\quad+[\mathbb{P}_0(L(-1,n+1)=1, X_1=-1)-\mathbb{P}_0(L(-1,n+1)=2,X_1=-1)].
\end{align}
	
For $\omega\in\Omega$, define
	\begin{equation}\label{eq: sup time}
		\sigma(\omega):=\sup\{0<k\leq n+1, S_{k}(\omega)=1\}.
	\end{equation}
	Then
	\begin{align}\label{eq: sigma and n+1}
		\mathbb{P}_0(L(0,n+1)=1, X_1=1)
		&=\mathbb{P}_0(L(0,n+1)=1, X_1=1,\sigma<n+1)\notag\\
		&\quad+\mathbb{P}_0(L(0,n+1)=1, X_1=1,\sigma=n+1).
	\end{align}

We deal with $\mathbb{P}_0(L(0,n+1)=1, X_1=1,\sigma<n+1)$ first.
For any $\omega\in\{L(0,n+1)=1, X_1=1,\sigma\le n+1\}$,  define $\omega'\in \Omega$ by
	\begin{equation}\label{eq: construct path01}
\omega'_k:=\left\{
\begin{array}{cl}
			\omega_k, &\mbox{if}\quad 0\leq k\leq\sigma(\omega),\\
			2-\omega_k, & \mbox{if}\quad k> \sigma(\omega).
		\end{array}\right.
	\end{equation}
One can check that  the map defined by (\ref{eq: construct path01}) is a one-to-one correspondence between the sets $\{L(0,n+1)=1,S_0=0,X_1=1,\sigma<n+1\}$ and $\{L(0,n+1)=2,S_0=0,X_1=1\}$.
It follows that
	\begin{equation}\label{eq: n prob 01}
		\mathbb{P}_0(L(0,n+1)=1,X_1=1, \sigma<n+1)=\mathbb{P}_0(L(0,n+1)=2,X_1=1).
	\end{equation}

Now we deal with $\mathbb{P}_0(L(0,n+1)=1, X_1=1, \sigma=n+1)$. Note that
	\begin{align}\label{eq: n prob 02}
		&\mathbb{P}_0(L(0,n+1)=1, X_1=1, \sigma=n+1)\notag\\
		&=\mathbb{P}_0(L(0,n+1)=1, X_1=1, S_n=2, S_{n+1}=1)\notag\\
		&=\mathbb{P}_0(X_1=1)\mathbb{P}_0(S_j\geq 1,\,\,1\leq j\leq n+1, S_{n+1}=1|X_1=1)\notag\\
		&=\frac{1}{2}\mathbb{P}_0(S_j\geq 0, \,\,0\leq j\leq n, S_n=0).
	\end{align}

Combining  \eqref{eq: sigma and n+1}, \eqref{eq: n prob 01} and \eqref{eq: n prob 02}, we get
\begin{align}\label{eq:  prob equal 03}
		&\mathbb{P}_0(L(0,n+1)=1,X_1=1)\notag\\
		&=\mathbb{P}_0(L(0,n+1)=2,X_1=1)+\frac{1}{2}\mathbb{P}_0(S_j\geq 0, 0\leq j\leq n, S_n=0).
\end{align}
Similarly, by the symmetry of $\{S_m\}_{m\geq 0}$, we have
\begin{align}\label{eq:  prob 04}
&\mathbb{P}_0(L(-1,n+1)=1,X_1=-1)\notag\\
&=\mathbb{P}_0(L(-1,n+1)=2,X_1=-1)+\frac{1}{2}\mathbb{P}_0(S_j\leq 0, 0\leq j\leq n, S_n=0)\notag\\
&=\mathbb{P}_0(L(-1,n+1)=2,X_1=-1)+\frac{1}{2}\mathbb{P}_0(S_j\geq 0, 0\leq j\leq n, S_n=0).
\end{align}
By \eqref{proof-thm1.1-b}, \eqref{eq:  prob equal 03} and \eqref{eq:  prob 04}, we obtain
\begin{align}\label{eq: expectation n+1 minus n}
\mathbb{E}_0(\alpha(n+1))-\mathbb{E}_0(\alpha(n))=\mathbb{P}_0(S_j\geq 0, 0\leq j\leq n, S_n=0).
\end{align}

When $n$ is odd, we have
	\begin{equation}\label{eq: n prob 03 odd}
		\mathbb{P}_0(S_j\geq 0, \,\,0\leq j\leq n, S_n=0)=0.
	\end{equation}

When $n$ is even, we can express the probability above in terms of the Catalan numbers. Recall that the $k$-th Catalan number is defined by
$$
C_k:=\frac{1}{k+1}\binom{2k}{k},\ k\geq 0,
$$
which stands for the number of Dyck paths at time $2k$. A Dyck path  at time $2k$ is a random walk of $2k$ steps that begins at 0, ends at 0 ($k$ up steps, $k$ down steps) and never goes below 0 (nonnegative).
Then we get that when $n=2k$,
\begin{align}\label{eq: n prob 03}
\mathbb{P}_0(S_j\geq 0, \,\,0\leq j\leq n, S_n=0)
&=\frac{C_k}{2^n}=\frac{1}{2^n}\cdot\frac{1}{k+1}\binom{2k}{k}=2\cdot \frac{(n-1)!!}{(n+2)!!}.
\end{align}

By \eqref{eq: expectation n+1 minus n}, \eqref{eq: n prob 03 odd} and \eqref{eq: n prob 03}, we obtain
\begin{align*}
\mathbb{E}(\alpha(n+1))-\mathbb{E}(\alpha(n))
&=\mathbb{E}_0(\alpha(n+1))-\mathbb{E}_0(\alpha(n))\notag\\
&=\mathbb{P}_0(S_j\geq 0, 0\leq j\leq n, S_n=0)\notag\\
&=\left\{
\begin{array}{cl}
0,&\mbox{if}\ n\ \text{is odd},\\
2\cdot \frac{(n-1)!!}{(n+2)!!},&\mbox{if}\ n\ \text{is even}.
\end{array}
\right.
\end{align*}
	
(ii)
Recall the Taylor expansion
\begin{align*}
\sqrt{1-x}=1-\frac{1}{2}x-\sum_{m=2}^{\infty}\frac{(2m-3)!!}{(2m)!!}
x^m, \qquad x\in[-1,1].
\end{align*}
This, together with (i), implies that
\begin{align*}
\lim\limits_{n\rightarrow\infty}\mathbb{E}(\alpha(n+1))
&=\mathbb{E}(\alpha(1))+\lim\limits_{n\rightarrow\infty}\sum^n_{k=1}
[\mathbb{E}(\alpha(k+1))-\mathbb{E}(\alpha(k))]\\
&=1+2\sum^{\infty}_{k=1}\frac{(2k-1)!!}{(2k+2)!!}\\
&=1+2\left(1-\frac{1}{2}\right)=2.
\end{align*}
The proof is complete.
\end{proof}

\smallskip

\begin{rem}\label{rem-2.1}
The basic idea of the above proof comes from Newman \cite{Ne84}.
But the case of rarely visited edges is more complicated to deal with than the case of rarely visited sites.
In the case of rarely visited site sites in  Newman \cite{Ne84}, it
is relatively easy to show that
$E_0(f_1(n+1))=E_0(f_1(n))=E_0(f_1(1))\equiv2$.
However, in the case
rarely visited edges, $E_0(\alpha(n))$ is not constant and things are more complicated.
\end{rem}

\section{Proofs of Theorems \ref{thm: inequality of alpha(n)} and \ref{thm: limitation of alpha(n)}}\setcounter{equation}{0}

Our proofs of Theorems \ref{thm: inequality of alpha(n)} and  \ref{thm: limitation of alpha(n)} are inspired by Major \cite{Ma88}.

\subsection{Some preparations}


It follows from \cite[Lemma 3 and Remark 6]{Ma88} that
\begin{align}\label{eq: L}
&\lim_{n\to\infty}n\mathbb{P}\left(S_j>0\ \mbox{for all}\ 0<j\leq n\ \mbox{and}\ S_j<S_n\ \mbox{for all}\ 0\leq j<n\right)\notag\\
&=\lim_{n\to\infty}n\mathbb{P}(0<S_j<S_n, \,\,\text{for all}\,\, 0<j<n)=
\frac{1}{4}.
\end{align}
It is well known (see, for instance, \cite[Lemma 4.9.3]{Durrett}) that
$$
\mathbb{P}(S_1\neq 0, \dots, S_{2n}\neq 0)=\mathbb{P}(S_{2n}=0).
$$
By symmetry, we have
\begin{align}\label{e:rs2}
\mathbb{P}(S_1> 0, \dots, S_{2n}> 0)=\frac12\mathbb{P}(S_{2n}=0).
\end{align}
By Feller \cite[(3.4) of Chapter III]{Feller68}, we have
\begin{align}\label{e:rs3}
\mathbb{P}(S_1\ge 0, \dots, S_{2n}\ge 0)=\mathbb{P}(S_{2n}=0).
\end{align}

For $k\geq 0$, define
\begin{eqnarray*}
\alpha_k(n):=\left\{
\begin{array}{cl}
\binom{\alpha(n)}{k},& \mbox{if} \  k\le \alpha(n),\\
0,& \mbox{otherwise}.
\end{array}
\right.
\end{eqnarray*}
$\alpha_k(n)$ is the number of subsets of $\mathcal{A}_n $ with exactly $k$ elements.

The following lemma plays a key role in the proof of Theorem \ref{thm: inequality of alpha(n)}.

\begin{lem}\label{lem: main lem}
Let  $a>0$.
If $k\sim a \log n$ as $n\rightarrow\infty$, then for any $\varepsilon\in (0, 1/2)$,
there exists  $n_0=n_0(a,\varepsilon)$  such that for all $n\geq n_0$,
 $$
 [(\frac12-\varepsilon)\log n]^{k} <\mathbb{E}\,\alpha_k(n)<[(\frac12+\varepsilon)\log n]^{k}.
 $$
\end{lem}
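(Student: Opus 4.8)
The plan is to estimate $\mathbb{E}\,\alpha_k(n) = \mathbb{E}\binom{\alpha(n)}{k}$ by expanding the binomial coefficient combinatorially. Since $\binom{\alpha(n)}{k}$ counts the number of $k$-element subsets of the rarely visited edges $\mathcal{A}_n$, I have

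$$
\mathbb{E}\,\alpha_k(n) = \sum_{\substack{y_1<y_2<\cdots<y_k}} \mathbb{P}\big(L(y_1,n)=1, L(y_2,n)=1, \ldots, L(y_k,n)=1\big),
$$

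where the sum runs over all strictly increasing $k$-tuples of integers. So the task reduces to controlling the probability that $k$ prescribed edges are each crossed exactly once, summed over all ordered configurations of those edges.

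**The key structural observation** is that an edge $\langle y,y+1\rangle$ with $L(y,n)=1$ is crossed exactly once, which means the walk passes over it a single time and never returns across it. This forces a decomposition of the path into independent excursions: between consecutive rarely-visited edges, the walk is confined to the interval they bound, and the single crossing events link these pieces together by the strong Markov property. I would first analyze the single-edge probability $\mathbb{P}(L(y,n)=1)$ and, more importantly, the joint probability for a configuration $y_1<\cdots<y_k$. Using the reflection/excursion identities already assembled in the preparations — in particular \eqref{e:rs2}, \eqref{e:rs3}, and the asymptotic \eqref{eq: L} giving $n\,\mathbb{P}(0<S_j<S_n \text{ for } 0<j<n)\to \tfrac14$ — I expect each crossing-once event to contribute, after summing the geometric/time allocations over the $k+1$ sub-paths, a factor asymptotic to $\tfrac12\log n$. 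The product over $k$ edges then yields the target $[(\tfrac12\pm\varepsilon)\log n]^k$.

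**Concretely, I would proceed as follows.** First, reduce the joint event $\{L(y_i,n)=1, \ i=1,\dots,k\}$ via the strong Markov property at the crossing times to a product over excursions, obtaining a sum over the times $n_0+n_1+\cdots+n_k=n$ spent in each region, where each factor is a ``never-recross'' probability estimated by \eqref{e:rs2}--\eqref{e:rs3} and \eqref{eq: L}. Second, since the walk starts at $0$, I would track which of the $y_i$ are positive and which negative and handle the two sides symmetrically. Third, I would sum over the spatial configuration: the number of admissible increasing tuples is controlled, and combined with the $n^{-1}$-type decay from \eqref{eq: L}, the sum over a single edge's position contributes a logarithmic factor $\sim\tfrac12\log n$ (this is where the harmonic-type sum $\sum_{m=1}^{n} \tfrac1m \sim \log n$ and the constant $\tfrac14$, doubled by the two-sided symmetry, combine to give $\tfrac12$). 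Raising to the $k$-th power via the product structure, and absorbing the $k\sim a\log n$ growth into the $\varepsilon$-slack, gives the two-sided bound for $n$ large.

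**The main obstacle** will be making the excursion decomposition rigorous while keeping uniform control over the error terms as $k$ grows like $a\log n$: the asymptotic \eqref{eq: L} holds per-excursion, but here I multiply $k$ such estimates together and sum over exponentially many spatial configurations, so I must ensure the $o(1)$ errors in each factor do not accumulate to spoil the bound. The $\varepsilon$-room in the statement is precisely what allows this — for the lower bound I restrict to configurations where every excursion length is comfortably large (so \eqref{eq: L} applies with its constant $\tfrac14 - \varepsilon'$), and for the upper bound I sum over all configurations and bound crudely, checking that the combinatorial overcounting stays within a factor $(1+\varepsilon')^k$. Getting these per-factor estimates to hold \emph{uniformly} in the configuration, and verifying that the lower-bound restriction discards only a negligible fraction of the mass, is the delicate heart of the argument.
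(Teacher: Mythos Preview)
Your overall strategy is sound and converges to the paper's, but you take a detour. You parametrize by the \emph{spatial positions} $y_1<\cdots<y_k$ of the edges and plan to recover independence via the strong Markov property at the crossing times; the paper instead parametrizes directly by the \emph{crossing times} $0\le j_1<\cdots<j_k\le n-1$, writing
\[
\alpha^+_k(n)=\sum_{0\le j_1<\cdots<j_k\le n-1}\mathbf{1}_{D_1(j_1)\,C_2(j_1,j_2)\cdots C_2(j_{k-1},j_k)\,D_2(j_k,n)},
\]
with $D_1,C_2,D_2$ the ladder events defined just before Proposition~\ref{prop: events limit}. Since these events depend on disjoint blocks of increments, the factorization is immediate and no strong Markov step is needed; the asymptotic $t\,\mathbb{P}(C_2(t))\to\tfrac12$ then yields $\sum_{y=1}^{n-1}\mathbb{P}(C_2(y))\sim\tfrac12\log n$ per inner factor, while a single uniform bound on $\sum_j\mathbb{P}(D_1(j))\mathbb{P}(D_2(j+r,n))$ disposes of the two boundary factors. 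Your spatial parametrization would force you, for each gap $y_i-y_{i-1}$, to analyse the probability that the walk stays confined to $[y_{i-1}{+}1,y_i]$ for a prescribed duration before stepping up --- an interval-exit computation that the time parametrization sidesteps entirely (indeed, summing your interval probability over all gap sizes is exactly what recovers $\mathbb{P}(C_2(\cdot))$). One further structural point you should make explicit: $\alpha^+(n)\alpha^-(n)=0$ always, since a rarely visited positive edge forces $S_n>0$ and a negative one forces $S_n<0$; hence $\alpha_k(n)=\alpha^+_k(n)+\alpha^-_k(n)$ exactly, and the two-sided symmetry enters only as an additive doubling at the very end, not as a multiplicative factor inside the product.
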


For non-negative integers $r$ and $t$, we define
 \begin{align*}
 C_1(t)&:=\{0<S_l<S_t\ \text{for all}\,\, 0<l<t\}, \,\,0<t\leq n;\\
 C_2(r,t)&:=\{S_r<S_l\leq S_t\ \text{for all}\,\, r<l\leq t\}, \,\,0\leq r<t\leq n;\\
 C_2(t)&:=\{0<S_l\leq S_t\ \text{for all}\,\, 0<l\leq t\}, \,\,0<t\leq n;\\
 D_1(t)&:=\{S_l\leq S_t\ \text{for all}\,\, 0\leq l\leq t\}, \,\,0\leq t\leq n;\\
 D_2(r,t)&:=\{S_r<S_l\ \text{for all}\,\, r<l\leq t\}, \,\,0\leq r<t\leq n.
 \end{align*}

\begin{lem}\label{prop: events limit}
  (i)  $\lim\limits_{t\rightarrow\infty}t\mathbb{P}(C_2(t))=\frac12$;

  (ii) $\lim\limits_{t\rightarrow\infty}\sqrt{t}\mathbb{P}(D_1(t))=\sqrt{2/\pi}$;

  (iii) $\lim\limits_{t-r\rightarrow\infty}\sqrt{t-r}\mathbb{P}(D_2(r,t))=1/\sqrt{2\pi}$.
\end{lem}

\begin{proof}[\bf Proof.] (i) Since
\begin{align*}
\{X_{t+1}=1\}\cap C_2(t)&=\{X_{t+1}=1, 0<S_l\leq S_t\ \text{for all}\,\, 0<l\leq t\}\\
&=\{0<S_l\leq S_t<S_{t+1},\text{for all}\,\, 0<l\leq t\}\\
&=\{0<S_l<S_{t+1}\ \mbox{for all}\ 0<l<t+1\}\\
&=C_1(t+1),
\end{align*}
we have
$$
\mathbb{P}(C_1(t+1))=\mathbb{P}(\{X_{t+1}=1\}\cap C_2(t))=\mathbb{P}(X_{t+1}=1)\mathbb{P}(C_2(t))=\frac{1}{2}\mathbb{P}(C_2(t)).
$$
Thus  by  \eqref{eq: L}, we have
$$
\lim\limits_{t\rightarrow\infty}t\mathbb{P}(C_2(t))
=\lim\limits_{t\rightarrow\infty}2t\mathbb{P}(C_1(t+1))=\frac12.
$$

(ii) Let $\check{S}^t_k:=S_t-S_{t-k}$, $k=0,1,\dots,t$. Then $\{\check{S}^t_k\}_{0\leq k\leq t}$ is a simple  symmetric random walk with $\check{S}^t_0=0$. Thus by \eqref{e:rs3}, we have
$$
\mathbb{P}(D_1(t))=\mathbb{P}\left(\check{S}^t_0\geq 0, \check{S}^t_1\geq 0, \dots, \check{S}^t_t\geq 0\right)=\mathbb{P}(S_{2m}=0)=\binom{2m}{m}\frac{1}{2^{2m}},
$$
where $m=t/2$ or $m=(t+1)/2$. Then, by Stirling's formula, we have
$$
\lim_{t\rightarrow\infty}\sqrt{t}\mathbb{P}(D_1(t))=\sqrt{2/\pi}.
$$

(iii) Let $\widehat{S}^r_k=S_{r+k}-S_r$, $k=0,1,2,\dots,t-r$. Then $\{\widehat{S}^r_k\}_{0\leq k\leq t-r}$ is a simple symmetric random walk with $\widehat{S}^r_0=0$. Thus by \eqref{e:rs2}, we have
$$
\mathbb{P}(D_2(r,t))=\mathbb{P}(\widehat{S}_1>0, \widehat{S}_2>0, \dots, \widehat{S}_{t-r}>0)=\frac{1}{2}\mathbb{P}(S_{2m}=0)=\frac{1}{2}\cdot \binom{2m}{m}\frac{1}{2^{2m}},
$$
where $m=(t-r)/2$ or $m=(t-r-1)/2$. Hence, by Stirling's formula, we have
$$
\lim\limits_{t-r\rightarrow\infty}\sqrt{t-r}\mathbb{P}(D_2(r,t))=1/\sqrt{2\pi}.
$$
\end{proof}

Recall that $\mathcal{A}_n$ is defined in \eqref{eq: defn edge}.
Define
\begin{equation}\label{eq: defn positive and negative edge}
\quad \mathcal{A}^+_n:=\{z\geq 0
: \langle z,z+1\rangle\in\mathcal{A}_n\},
\quad \mathcal{A}^-_n:=\{z\leq 0
: \langle z-1,z\rangle \in\mathcal{A}_n\}.
\end{equation}
Then there is a one-to-one correspondence between $\mathcal{A}^+_n$ and the collection of rarely visited edges on the positive half-axis of  $\{S_m\}_{m\geq 0}$ up to time $n$. There is also a one-to-one correspondence between $\mathcal{A}^-_n$ and the collection of rarely visited edges on the negative half-axis of  $\{S_m\}_{m\geq 0}$ up to time $n$.
 Let $\alpha^+(n):=\#\mathcal{A}^+_n, \alpha^-(n):=\#\mathcal{A}^-_n$.

For $k\geq 0$, define
\begin{eqnarray*}
\alpha^+_k(n):=\left\{
\begin{array}{cl}
\binom{\alpha^+(n)}{k},& \mbox{if} \  k\le \alpha^+(n),\\
0,& \mbox{otherwise}.
\end{array}
\right.
\end{eqnarray*}
$\alpha^+_k(n)$ is the number of subsets of $\mathcal{A}^+_n $ with exactly $k$ elements.
$\alpha^-_k(n)$ is defined similarly.

\medskip

\begin{proof}[\bf Proof of Lemma \ref{lem: main lem}.] For $k\geq 2$,  it holds that
\begin{equation*}
	 \alpha^+_k(n)\mathbf{1}_{\{k\le \alpha^+(n)\}}
		=\sum_{0\leq j_1<\dots<j_k\leq n-1}\mathbf{1}_{D_1(j_1)C_2(j_1,j_2)C_2(j_2,j_3)\cdots C_2(j_{k-1},j_k)D_2(j_k,n)},
\end{equation*}
where $\mathbf{1}_A(\,\cdot\,)$ is the indicator function. Hence,
  \begin{align}\label{eq: alpha k general01}
  \mathbb{E}\alpha^+_k(n)&=\sum_{0\leq j_1<\dots<j_k\leq n-1}\mathbb{P}(D_1(j_1)C_2(j_1,j_2)C_2(j_2,j_3)\cdots C_2(j_{k-1},j_k)D_2(j_k,n))\notag\\
  &=\sum_{0\leq j_1<\dots<j_k\leq n-1}\mathbb{P}(D_1(j_1))\mathbb{P}(C_2(j_1,j_2))\mathbb{P}(C_2(j_2,j_3))\cdots \mathbb{P}(C_2(j_{k-1},j_k))\mathbb{P}(D_2(j_k,n))\notag\\
  &=\sum_{0\leq j_1<\dots<j_k\leq n-1}\mathbb{P}(D_1(j_1))\mathbb{P}(C_2(j_2-j_1))\mathbb{P}(C_2(j_3-j_2))\cdots \mathbb{P}(C_2(j_k-j_{k-1}))\mathbb{P}(D_2(j_k,n)).
  \end{align}
Let $j=j_1, r=j_k-j_1, y_i=j_{i+1}-j_i, 1\leq i\leq k-1$. Then we have
\begin{align}\label{eq: alpha k general01-b}
&\mathbb{E}\alpha^+_k(n)\notag\\
&=\sum^{n-1}_{r=k-1}\left[\sum^{n-1-r}_{j=0}\mathbb{P}(D_1(j))\mathbb{P}(D_2(j+r,n))\right]\left[\sum_{\substack{0<y_i< r\\
  y_1+y_2+\dots+y_{k-1}=r}}\mathbb{P}(C_2(y_1))\mathbb{P}(C_2(y_2))\cdots \mathbb{P}(C_2(y_{k-1}))\right].
\end{align}
It follows from Lemma \ref{prop: events limit} that
there exists a positive constant $c_1$
such that for all
integers $n, r\ge 1$ and $j\ge 0$ with $n-j-r\geq 1$,
$$
\sqrt{j}\mathbb{P}(D_1(j))\leq c_1,\quad \sqrt{n-j-r}\mathbb{P}(D_2(j+r,n))\leq c_1.
$$
Thus
\begin{align}\label{eq: alpha k estimate02}
&\sum^{n-1-r}_{j=0}\mathbb{P}(D_1(j))\mathbb{P}(D_2(j+r,n))\notag\\
&=\mathbb{P}(D_2(r,n))+\sum^{n-1-r}_{j=1}\mathbb{P}(D_1(j))\mathbb{P}(D_2(j+r,n))\notag\\
&\leq c\left(1+\sum^{n-1-r}_{j=1}\frac{1}{\sqrt{j(n-r-j)}}\right)\notag\\
&=c\left(1+\sum^{n-1-r}_{j=1}\frac{1}{\sqrt{\frac{j}{n-r}}\sqrt{1-\frac{j}{n-r}}}\cdot\frac{1}{n-r}\right),
\end{align}
where $c=\max\{1, c^2_1\}$.
Since
$$
\lim\limits_{n-r\rightarrow\infty}\sum^{n-1-r}_{j=1}\frac{1}{\sqrt{\frac{j}{n-r}}\sqrt{1-\frac{j}{n-r}}}\cdot\frac{1}{n-r}=\int^1_0x^{-1/2}(1-x)^{-1/2}dx=\pi,
$$
we know that
$$
\sum^{n-1-r}_{j=1}\frac{1}{\sqrt{\frac{j}{n-r}}\sqrt{1-\frac{j}{n-r}}}\cdot\frac{1}{n-r}, \quad n-r\ge 1
$$
is bounded. Thus by \eqref{eq: alpha k estimate02},
   there exists a positive constant $C$ such that for all
      integers $n, r\ge 1$ with $n-r\geq 1$,
\begin{align}\label{3.12}
\sum^{n-1-r}_{j=0}\mathbb{P}(D_1(j))\mathbb{P}(D_2(j+r,n))\leq C.
\end{align}
Hence,  by (\ref{eq: alpha k general01-b}) and (\ref{3.12}), we have
 \begin{align*}
 \mathbb{E}\alpha^+_k(n)&\leq C\sum^{n-1}_{r=k-1}\sum_{\substack{0<y_i< r\\
  y_1+y_2+\dots+y_{k-1}=r}}\mathbb{P}(C_2(y_1))\mathbb{P}(C_2(y_2))\cdots \mathbb{P}(C_2(y_{k-1}))\\
&\leq  C\sum_{\substack{0<y_i\leq n-1\\
 i=1,2,\dots,k-1}}\mathbb{P}(C_2(y_1))\mathbb{P}(C_2(y_2))\cdots \mathbb{P}(C_2(y_{k-1}))\\
  &=C\left(\sum^{n-1}_{y=1}\mathbb{P}(C_2(y))\right)^{k-1}.
 \end{align*}

 Combining Lemma \ref{prop: events limit}(i) with Stolz's theorem, we get that
 $$
 \lim\limits_{n\rightarrow\infty}\frac{\sum^{n-1}_{y=1}\mathbb{P}(C_2(y))}{\log n}=\lim\limits_{n\rightarrow\infty}\frac{\sum^{n-1}_{y=1}
 \mathbb{P}(C_2(y))}{\sum^{n-1}_{y=1}\frac{1}{y}}\cdot \frac{\sum^{n-1}_{y=1}\frac{1}{y}}{\log n}=
 \frac12.
 $$
It follows that,  for all $\varepsilon>0$, there exists $N_1(\varepsilon)$ such that for all $n>N_1(\varepsilon)$,
$\sum^{n-1}_{y=1}\mathbb{P}(C_2(y))\leq (\frac12+\varepsilon)\log n$ and $\frac{C}{(\frac12+\varepsilon)\log n}\leq \frac{1}{2}$.
Thus for all $n> N_1(\varepsilon)$, it holds that
\begin{equation}\label{eq: alpha+k upper bound}
 \mathbb{E}\alpha^+_k(n)\leq C[(\frac12+\varepsilon)\log n]^{k-1}=
 \frac{C}{(\frac12+\varepsilon)\log n}[(\frac{1}{2}+\varepsilon)\log n]^{k}
 \leq \frac{1}{2}[(\frac12+\varepsilon)\log n]^{k}.
\end{equation}

Next, we bound $\mathbb{E}\alpha^+_k(n)$ from below.
Since $k\sim a \log n$ as $n\rightarrow\infty$, we know when $n$ is sufficiently large, $\frac{n}{3k}>1$. Let $j_1\leq \frac{n}{3}$, $0<j_l-j_{l-1}\leq \frac{n}{3k}$, $l=2,3,\dots,k$. Then $j_k=\sum^k_{l=2}(j_l-j_{l-1})+j_1<\frac{2n}{3}$. Hence, by \eqref{eq: alpha k general01}, we have that
\begin{align}\label{eq: Ealpha+k lower01}
    \mathbb{E}\alpha^+_k(n)\geq &\sum_{\substack{0\leq j_1\leq \frac{n}{3}\\ 0<j_l-j_{l-1}\leq \frac{n}{3k},\\l=2,3,\dots,k}}\mathbb{P}(D_1(j_1))\mathbb{P}(C_2(j_2-j_1))\mathbb{P}(C_2(j_3-j_2))\cdots \mathbb{P}(C_2(j_k-j_{k-1}))\mathbb{P}(D_2(j_k,n))\notag\\
    =&\sum_{k-1\leq r\leq (k-1)\frac{n}{3k}}\left[\sum_{0\leq j\leq \frac{n}{3}}\mathbb{P}(D_1(j))\mathbb{P}(D_2(j+r,n))\right]\notag\\
    &\cdot \left[\sum_{\substack{0<y_i\leq \frac{n}{3k},1\leq i\leq k-1\\ y_1+y_2+\dots+y_{k-1}=r}}\mathbb{P}(C_2(y_1))\mathbb{P}(C_2(y_2))\cdots \mathbb{P}(C_2(y_{k-1}))\right].
\end{align}
It follows from  Lemma \ref{prop: events limit}(ii)(iii)
that there exists a positive constant $c_2$ such that  for all
integers $n, r\ge 1$ and $j\ge 0$ with $n-j-r\geq1$,
$$\sqrt{j}\mathbb{P}(D_1(j))\geq c_2, \,\,\sqrt{n-j-r}\mathbb{P}(D_2(j+r,n))\geq c_2.$$
Thus
\begin{equation*}
   \sum_{0\leq j\leq \frac{n}{3}}\mathbb{P}(D_1(j))\mathbb{P}(D_2(j+r,n))\\
   \geq c^2_2\sum_{0\leq j\leq \frac{n}{3}}\frac{1}{\sqrt{nj}}=\frac{c^2_2}{\sqrt{3}}\sum_{0\leq j\leq \frac{n}{3}}\frac{1}{\sqrt{\frac{3j}{n}}}\cdot \frac{3}{n},
\end{equation*}
which together with  $\lim\limits_{n\rightarrow\infty}\sum_{0\leq j\leq \frac{n}{3}}\frac{1}{\sqrt{\frac{3j}{n}}}\cdot \frac{3}{n}=\int^1_0x^{-1/2}\mathrm{d}x=2$ implies that
there exists  a positive constant $\tilde{C}$ (independent of $r\geq 1$) such that
\begin{align}\label{eq: sum D lower}
\sum_{0\leq j\leq \frac{n}{3}}\mathbb{P}(D_1(j))\mathbb{P}(D_2(j+r,n))\geq \tilde{C}.
\end{align}
Hence, by \eqref{eq: Ealpha+k lower01} and \eqref{eq: sum D lower}, we have
\begin{align}\label{eq: Ealpha+k lower02}
\mathbb{E}\alpha^+_k(n)\geq &\tilde{C}\sum_{k-1\leq r\leq (k-1) \frac{n}{3k}}\left[\sum_{\substack{0<y_i\leq  \frac{n}{3k}, 1\leq i\leq k-1\\ y_1+y_2+\dots+y_{k-1}=r}}\mathbb{P}(C_2(y_1))\mathbb{P}(C_2(y_2))\cdots \mathbb{P}(C_2(y_{k-1}))\right]\notag\\
=&\tilde{C}\left[\sum_{0<y\leq \frac{n}{3k}}\mathbb{P}(C_2(y))\right]^{k-1}.
\end{align}
Combining the fact that $k\sim a \log n$ as $n\rightarrow\infty$ with Lemma \ref{prop: events limit}(i), we have
$$
\lim\limits_{n\rightarrow\infty}\frac{\sum_{0<y\leq \frac{n}{3k}}\mathbb{P}(C_2(y))}{\log n}=\lim\limits_{n\rightarrow\infty}\frac{\sum_{0<y\leq \frac{n}{3k}}\mathbb{P}(C_2(y))}{\sum_{0<y\leq \frac{n}{3k}}\frac{1}{y}}\cdot\frac{\sum_{0<y\leq \frac{n}{3k}}\frac{1}{y}}{\log \frac{n}{3k}}\cdot \frac{\log \frac{n}{3k}}{\log n }=
\frac12.
$$
It follows that, for any $\varepsilon\in (0, \frac12)$,
there exists $N_2(a,\varepsilon)> N_1(\varepsilon)$ such that for all $n> N_2(a,\varepsilon)$, $\sum_{0<y\leq \frac{n}{3k}}\mathbb{P}(C_2(y))\geq (\frac12-\frac{\varepsilon}{2})\log n$, which together with \eqref{eq: Ealpha+k lower02} implies that for all $n>N_2(a,\varepsilon)$,
$$
\mathbb{E}\alpha^+_k(n)\geq \tilde{C}\left[(\frac12-\frac{\varepsilon}{2})\log n\right]^{k-1}=\left[(\frac12-\varepsilon)\log n\right]^k\cdot \frac{\tilde{C}}{\frac12-\varepsilon}\left[\frac{\frac12-\frac{\varepsilon}{2}}
{\frac12-\varepsilon}\right]^{k-1}\frac{1}{\log n}.
$$
Since
$k\sim a \log n$ as $n\rightarrow\infty$, we have

$$\lim\limits_{n\rightarrow\infty}\left[\frac{\frac12-\frac{\varepsilon}{2}}{\frac12-\varepsilon}\right]
^{k-1}\frac{1}{\log n}=\infty.$$
Hence, there exists $N_3(a,\varepsilon)\geq N_2(a,\varepsilon)$ such that for all $n>N_3(a,\varepsilon)$,
\begin{equation}\label{eq: alpha+k lower bound}
  \mathbb{E}\alpha^+_k(n)\geq \frac{1}{2}\left[(\frac12-\varepsilon)\log n\right]^k.
\end{equation}

By the symmetry of $\{S_n\}_{n\geq 0}$, \eqref{eq: defn positive and negative edge}, \eqref{eq: alpha+k upper bound} and \eqref{eq: alpha+k lower bound},  we  obtain that  for all $n> N_3(a,\varepsilon)$,
\begin{equation}\label{eq: alpha positive and negative inequ}
    \frac{1}{2}\left[(\frac12-\varepsilon)\log n\right]^k\leq \mathbb{E}\alpha^-_k(n)=\mathbb{E}\alpha^+_k(n)\leq \frac{1}{2}[(\frac12+\varepsilon)\log n]^k.
\end{equation}

Under $\mathbb{P}$ (i.e. $\mathbb{P}_0$ ), if $S_n>0$, then
every edge $\langle x-1,x\rangle$ with $x\leq 0$, if has been visited, is visited at least twice,
and thus $\alpha^-(n)=0$. Similarly, if $S_n<0$, we have that $\alpha^+(n)=0$, and if $S_n=0$, then $\alpha^+(n)=\alpha^-(n)=0$. It follows that
$\alpha^+(n)\alpha^-(n)=0$.   Hence
$$
\alpha_k(n)=\alpha^+_k(n)+\alpha^-_k(n).
$$
Thus, by \eqref{eq: alpha+k upper bound}, \eqref{eq: alpha+k lower bound} and \eqref{eq: alpha positive and negative inequ},  for any $\varepsilon\in (0, \frac12)$, there exists $n_0(a,\varepsilon)=N_3(a,\varepsilon)$ such that for all $n> n_0(a,\varepsilon)$,
$$
\left[(\frac12-\varepsilon)\log n\right]^k\leq \mathbb{E}\alpha_k(n)=\mathbb{E}\alpha^+_k(n)+\mathbb{E}\alpha^-_k(n)\leq [(\frac12+\varepsilon)\log n]^k.
$$
\end{proof}

The next lemma will also play a very important role in the proof of Theorem  \ref{thm: inequality of alpha(n)}.

\begin{lem}\label{lem-3.4}
Let $\delta\in (0, \frac12)$, $\bar{a}=a(1+2\delta), d=2a$, $k=\lfloor 2a\log n\rfloor=\lfloor d\log n\rfloor$, $k'=\lfloor d(1+\delta)\log n\rfloor$, and $k''=\lfloor d(1+2\delta)\log n\rfloor$. Then
for sufficiently large $n$,
it holds that
\begin{equation}\label{eq: part estimate01}
\sum_{m\geq \bar{a}(\log n)^2}\mathbb{P}(\alpha(n)=m)\binom{m}{k'}\leq \frac{1}{3}\mathbb{E}\alpha_{k'}(n)
\end{equation}
and
\begin{equation}\label{eq: part estimate02}
	\sum_{m\leq a(\log n)^2}\mathbb{P}(\alpha(n)=m)\binom{m}{k'}\leq \frac{1}{3}\mathbb{E}\alpha_{k'}(n).
\end{equation}
\end{lem}

\begin{proof}
We will only
prove \eqref{eq: part estimate01}.
The proof of \eqref{eq: part estimate02} is similar.
For any $m\geq k''$, $\frac{\binom{m}{k'}}{\binom{m}{k''}}$ decreases as $m$ increases. Thus
\begin{align}\label{eq: part estimate03}
&\sum_{m\geq \bar{a}(\log n)^2}\mathbb{P}(\alpha(n)=m)\binom{m}{k'}\notag\\
&=\sum_{m\geq \bar{a}(\log n)^2}\mathbb{P}(\alpha(n)=m)\binom{m}{k''}\cdot\frac{\binom{m}{k'}}{\binom{m}{k''}}\notag\\
&\leq \left[\sum_{m\geq \bar{a}(\log n)^2}\mathbb{P}(\alpha(n)=m)\binom{m}{k''}\right]\cdot\frac{\binom{\lfloor \bar{a}(\log n)^2\rfloor}{k'}}{\binom{\lfloor \bar{a}(\log n)^2\rfloor}{k''}}\notag\\
&\leq \mathbb{E}\alpha_{k''}(n)\frac{\binom{\lfloor \bar{a}(\log n)^2\rfloor}{k'}}{\binom{\lfloor \bar{a}(\log n)^2\rfloor}{k''}}.
\end{align}
It follows from Lemma \ref{lem: main lem} that
that  there exists $n(\delta)$ such that for all $n>n(\delta)$,
\begin{align}\label{3.23-a}
\frac{\mathbb{E}\alpha_{k''}(n)}{\mathbb{E}\alpha_{k'}(n)}&\leq \frac{[\frac12(1+\delta^3)\log n]^{k''}}{[\frac12(1-\delta^3)\log n]^{k'}}\leq \frac{[\frac12(1+\delta^3)\log n]^{d(1+2\delta)\log n}}{[\frac12(1-\delta^3)\log n]^{d(1+\delta)\log n-1}}.
\end{align}
Using properties of  the Gamma function and Stirling's formula, we get
\begin{align}\label{3.23-b}
&\frac{\binom{\lfloor \bar{a}(\log n)^2\rfloor}{k'}}{\binom{\lfloor \bar{a}(\log n)^2\rfloor}{k''}}=\frac{\lfloor \bar{a}(\log n)^2\rfloor !}{k'!(\lfloor \bar{a}(\log n)^2\rfloor-k')!}\cdot
\frac{k''!(\lfloor \bar{a}(\log n)^2\rfloor-k'')!}{\lfloor \bar{a}(\log n)^2\rfloor !}\notag\\
&=\frac{\Gamma(k''+1)\Gamma(\lfloor \bar{a}(\log n)^2\rfloor-k''+1)}{\Gamma(k'+1)\Gamma(\lfloor \bar{a}(\log n)^2\rfloor-k'+1)}\notag\\
&\leq \frac{\Gamma(d(1+2\delta)\log n+1)\Gamma(\bar{a}(\log n)^2-d(1+2\delta)\log n+2)}{\Gamma(d(1+\delta)\log n)\Gamma(\bar{a}(\log n)^2-d(1+\delta)\log n)}\notag\\
&=[d(1+\delta)\log n]\cdot [\bar{a}(\log n)^2-d(1+\delta)\log n]\cdot [\bar{a}(\log n)^2-d(1+2\delta)\log n+1]\notag\\
&\quad \cdot\frac{\Gamma(d(1+2\delta)\log n+1)\Gamma(\bar{a}(\log n)^2-d(1+2\delta)\log n+1)}{\Gamma(d(1+\delta)\log n+1)\Gamma(\bar{a}(\log n)^2-d(1+\delta)\log n+1)}\notag\\
&\leq C(\log n)^5\notag\\
	&\quad\cdot \frac{(d(1+2\delta)\log n)^{d(1+2\delta)\log n+1/2}\cdot (\bar{a}(\log n)^2-d(1+2\delta)\log n)^{\bar{a}(\log n)^2-d(1+2\delta)\log n+1/2}}{(d(1+\delta)\log n)^{d(1+\delta)\log n+1/2}\cdot (\bar{a}(\log n)^2-d(1+\delta)\log n)^{\bar{a}(\log n)^2-d(1+\delta)\log n+1/2}}.
\end{align}
Combining (\ref{3.23-a}) and (\ref{3.23-b}), we get that for all $n>n(\delta)$,
\begin{align}\label{eq: alpha k' and alpha k''01}
&\frac{\mathbb{E}\alpha_{k''}(n)}{\mathbb{E}\alpha_{k'}(n)}\cdot \frac{\binom{\lfloor \bar{a}(\log n)^2\rfloor}{k'}}{\binom{\lfloor \bar{a}(\log n)^2\rfloor}{k''}}\notag\\
&\leq C\frac{[\frac12(1+\delta^3)\log n]^{d(1+2\delta)\log n}}{[\frac12(1-\delta^3)\log n]^{d(1+\delta)\log n-1}}(\log n)^5\notag\\
	&\quad\cdot \frac{(d(1+2\delta)\log n)^{d(1+2\delta)\log n+1/2}\cdot (\bar{a}(\log n)^2-d(1+2\delta)\log n)^{\bar{a}(\log n)^2-d(1+2\delta)\log n+1/2}}{(d(1+\delta)\log n)^{d(1+\delta)\log n+1/2}\cdot (\bar{a}(\log n)^2-d(1+\delta)\log n)^{\bar{a}(\log n)^2-d(1+\delta)\log n+1/2}}\notag\\
&\leq C(\log n)^6\frac{(1+\delta^3)^{d(1+2\delta)\log n}}{(1-\delta^3)^{d(1+\delta)\log n}}\left(\frac{1+2\delta}{1+\delta}\right)^{d(1+\delta)\log n}\frac{\left(1-\frac{d(1+2\delta)}{\bar{a}\log n}\right)^{\bar{a}(\log n)^2-d(1+2\delta)\log n+1/2}}{\left(1-\frac{d(1+\delta)}{\bar{a}\log n}\right)^{\bar{a}(\log n)^2-d(1+\delta)\log n+1/2}}.
\end{align}
By Taylor's expansion, we have
\begin{align*}
&\left(1-\frac{d(1+2\delta)}{\bar{a}\log n}\right)^{\bar{a}(\log n)^2-d(1+2\delta)\log n+1/2}\\
&=\exp\left\{\left[\bar{a}(\log n)^2-d(1+2\delta)\log n+1/2\right]\log\left(1-\frac{d(1+2\delta)}{\bar{a}\log n}\right)\right\}\\
&=\exp\left\{\left[\bar{a}(\log n)^2-d(1+2\delta)\log n+1/2\right]\left[-\frac{d(1+2\delta)}{\bar{a}\log n}+O((\log n)^{-2})\right]\right\}\\
&=\exp\{-d(1+2\delta)\log n+O(1)\}.
\end{align*}
Similarly, we have
\begin{equation*}
\left(1-\frac{d(1+\delta)}{\bar{a}\log n}\right)^{\bar{a}(\log n)^2-d(1+\delta)\log n+1/2}=\exp\{-d(1+\delta)\log n+O(1)\},
\end{equation*}
\begin{align*}
\left(\frac{1+2\delta}{1+\delta}\right)^{d(1+\delta)\log n}=&\exp\left\{d(1+\delta)[\log(1+2\delta)-\log(1+\delta)]\log n\right\}\\
=&\exp\left\{d(1+\delta)(\delta-\frac{3}{2}\delta^2+O(\delta^3))\log n\right\}\\
=&\exp\left\{\left(d\delta-\frac{d\delta^2}{2}+O(\delta^3)\right)\log n\right\},
\end{align*}
and
\begin{align*}
&\frac{(1+\delta^3)^{d(1+2\delta)\log n}}{(1-\delta^3)^{d(1+\delta)\log n}}
=\exp\left\{d(1+2\delta)\log(1+\delta^3)\log n-d(1+\delta)\log(1-\delta^3)\log n\right\}\notag\\
&\hspace{3.2cm}=\exp\{O(\delta^3)\log n\}.
\end{align*}
Combining the four displays above with \eqref{eq: alpha k' and alpha k''01}, we get
\begin{align*}
&\frac{\mathbb{E}\alpha_{k''}(n)}{\mathbb{E}\alpha_{k'}(n)}\cdot \frac{\binom{\lfloor \bar{a}(\log n)^2\rfloor}{k'}}{\binom{\lfloor \bar{a}(\log n)^2\rfloor}{k''}}\\
&\leq C(\log n)^6\cdot\exp\{O(\delta^3)\log n\}\cdot\exp\left\{\left(d\delta-\frac{d\delta^2}{2}+O(\delta^3)\right)\log n\right\}\cdot\frac{\exp\{-d(1+2\delta)\log n\}}{\exp\{-d(1+\delta)\log n\}}\\
&\leq \exp\left\{-\left(\frac{d}{2}-O(\delta)\right)\delta^2\log n+6\log\log n+C\right\}.
\end{align*}
Thus there exists $\delta(d)>0$ such that for all $\delta<\delta(d)$, we have $\frac{d}{2}-O(\delta)=a-O(\delta)>0$. Hence we have
\begin{equation}\label{eq: eq: alpha k' and alpha k'' limit}
\lim\limits_{n\rightarrow\infty}\frac{\mathbb{E}\alpha_{k''}(n)}{\mathbb{E}\alpha_{k'}(n)}\cdot \frac{\binom{\lfloor \bar{a}(\log n)^2\rfloor}{k'}}{\binom{\lfloor \bar{a}(\log n)^2\rfloor}{k''}}=0.
\end{equation}
Combining this with \eqref{eq: part estimate03}, we get that, for any fixed $\delta\in (0, \delta(d))$,  there exists $N(\delta)>n(\delta)$ such that for all $n> N(\delta)$,
$$	\sum_{m\geq \bar{a}(\log n)^2}\mathbb{P}(\alpha(n)=m)\binom{m}{k'}\leq \mathbb{E}\alpha_{k'}(n)\cdot \frac{\mathbb{E}\alpha_{k''}(n)}{\mathbb{E}\alpha_{k'}(n)}\cdot \frac{\binom{\lfloor \bar{a}(\log n)^2\rfloor}{k'}}{\binom{\lfloor \bar{a}(\log n)^2\rfloor}{k''}}\leq \frac{1}{3}\mathbb{E}\alpha_{k'}(n).
$$
\end{proof}

The following two lemmas are important for proving Theorem~\ref{thm: limitation of alpha(n)}.

\begin{lem}\label{lem: limit}
Let $\alpha(n)$ be the same as in \eqref{eq: defn edge}.
If $\{f(n)\}_{n\ge 1}$ satisfies $0<f(n)<n$ and $\lim\limits_{n\to\infty}f(n)=\infty$, then there exists $C\in [0, \infty]$
such that $\mathbb{P}\left(\limsup\limits_{n\rightarrow\infty}\frac{\alpha(n)}{f(n)}=C\right)=1$.
\end{lem}
The proof of Lemma \ref{lem: limit} is routine by Kolmogorov's 0-1 law.
For the reader's convenience,
we put the detail of the proof in the appendix.

\begin{lem}\label{lem: stopping time}
Let $\sigma_n:=\inf\{k\geq 0: S_k=n\}$ for all $n\geq 0$. Then for any $q>2$,
$$\mathbb{P}(\lim_{n\to+\infty}\frac{\sigma_n}{n^q})=1.$$
\end{lem}
The proof of Lemma \ref{lem: stopping time} is routine.
For the reader's convenience, we put the detail of the proof in the appendix.

\subsection{Proof of Theorem \ref{thm: inequality of alpha(n)}}

	In this proof, $C$ stands for a positive constant whose value may change from one appearance to another. We prove the theorem in two steps.
	
{\bf Step 1}:
In this step, we will prove that, for all $a>0$ and $\varepsilon>0$, there exists $N_1(a,\varepsilon)$ such that for all $n>N_1(a,\varepsilon)$,
	$$\mathbb{P}(\alpha(n)>a(\log n)^2)<n^{-2a+\varepsilon}.$$
Let $k=\lfloor{2a}\log n\rfloor$ and $0<\delta<1$. By Markov's  inequality, we have
\begin{align}\label{proof-thm1.2-a}
\mathbb{P}\left(\alpha(n)>a(\log n)^2\right)
&\leq \mathbb{P}\left(\alpha(n)>\lfloor a(\log n)^2\rfloor\right)\notag\\
&=\mathbb{P}\left(\alpha_k(n)>\binom{\lfloor a(\log n)^2\rfloor}{k}\right)\notag\\
&\leq  \frac{\mathbb{E}\alpha_k(n)}{\binom{\lfloor a(\log n)^2\rfloor}{k}}.
\end{align}
By properties of the Gamma function, we have that  for $k=\lfloor{2a}\log n\rfloor$,
\begin{align}\label{proof-thm1.2-b}
\frac{1}{\binom{\lfloor a(\log n)^2\rfloor}{k}}
&=\frac{(\lfloor a(\log n)^2\rfloor-k)!k!}{\lfloor a(\log n)^2\rfloor!}\notag\\
&=\frac{\Gamma(k+1)\Gamma(\lfloor a(\log n)^2\rfloor-k+1)}{\Gamma(\lfloor a(\log n)^2\rfloor+1)}\notag\\
&\leq \frac{\Gamma(2a\log n+1)\Gamma(a(\log n)^2-2a\log n+2)}{\Gamma(a(\log n)^2)}\notag\\
&=a(\log n)^2\left(a(\log n)^2-2a\log n+1\right)\notag\\
&\quad\cdot \frac{\Gamma(2a\log n+1)\Gamma(a(\log n)^2-2a\log n+1)}{\Gamma(a(\log n)^2+1)}.
\end{align}
By (\ref{proof-thm1.2-a}), (\ref{proof-thm1.2-b}) and  Lemma \ref{lem: main lem},
 there exists $n_1(\delta)$ such that for all $n\geq n_1(\delta) $,
\begin{align}\label{eq: alpha n ineuq upper bound01}
\mathbb{P}\left(\alpha(n)>a(\log n)^2\right)
&\leq [\frac12(1+\delta)\log n]^{2a\log n}\cdot a(\log n)^2\left(a(\log n)^2-2a\log n+1\right)\notag\\
&\quad\cdot \frac{\Gamma(2a\log n+1)\Gamma(a(\log n)^2-2a\log n+1)}{\Gamma(a(\log n)^2+1)}.
\end{align}
Then by Stirling's formula, we have
\begin{align}\label{eq: alpha n ineuq upper bound02}
&\mathbb{P}\left(\alpha(n)>a(\log n)^2\right)\notag\\
&\leq C[\frac12(1+\delta)\log n]^{2a\log n}\cdot (\log n)^4\notag\\
&\quad\cdot \frac{(2a\log n)^{2a\log n+1/2}[a(\log n)^2-2a\log n]^{a(\log n)^2-2a\log n+1/2}}{[a(\log n)^2]^{a(\log n)^2+1/2}}\notag\\
&\leq C(1+\delta)^{2a\log n}\cdot (\log n)^{9/2}\left[1-\frac{2}{\log n}\right]^{a(\log n)^2-2a\log n+1/2}.
\end{align}
By Taylor's expansion, we have
\begin{align*}
&\left[1-\frac{2}{\log n}\right]^{a(\log n)^2-2a\log n+1/2}\\
&=\exp\left\{\left[a(\log n)^2-2a\log n+1/2\right]\log\left(1-\frac{2}{\log n}\right)\right\}\\
&=\exp\left\{\left[a(\log n)^2-2a\log n+1/2\right]\left[-\frac{2}{\log n}+O((\log n)^{-2})\right]\right\}\\
&=\exp\left\{-2a(\log n)+O(1)\right\},
\end{align*}
which together with (\ref{eq: alpha n ineuq upper bound02}) implies  that for all $\varepsilon >0$,
\begin{align*}
&\mathbb{P}(\alpha(n)>a(\log n)^2)\\
&\leq C(1+\delta)^{2a\log n}(\log n)^{\frac{9}{2}}\cdot \exp\left\{-2a(\log n)\right\}\\
&=n^{-2a+\varepsilon}\cdot \exp\left\{\left[2a\log(1+\delta)-\varepsilon\right]\log n+\frac{9}{2}\log(\log n)+C\right\}.
\end{align*}
 Hence, for any fixed $\varepsilon>0$, there exists $0<\delta_1(\varepsilon)<\frac{1}{2}$ such that for all $\delta<\delta_1(\varepsilon)$, $2a\log(1+\delta)-\varepsilon<0$. Thus we have $\lim\limits_{n\rightarrow\infty}\exp\{[2a\log(1+\delta)-\varepsilon]\log n+\frac{9}{2}\log(\log n)+C\}=0$.
 Therefore,
 for any $\delta\in (0, \delta_1(\varepsilon))$,
 there exists $N_1(a,\varepsilon)>n_1(\delta)$ such that  for all $n>N_1(a,\varepsilon)$,
 \begin{equation}\label{eq: alpha n upper}
\mathbb{P}(\alpha(n)>a(\log n)^2)<n^{-2a+\varepsilon}.
 \end{equation}

{\bf Step 2}: In this step, we will prove that for all $a>0$ and $\varepsilon>0$, when $n$ is sufficiently large,
\begin{align}\label{Step-2-a}
\mathbb{P}(\alpha(n)>a(\log n)^2)>n^{-2a-\varepsilon}.
\end{align}

We will use the notation $\delta, {\bar a}, d, k, k'$ and $k''$ in the statement of Lemma \ref{lem-3.4}.
It follows from Lemma \ref{lem-3.4} that for sufficiently large $n$,
\begin{align}\label{eq: alpha n lower}
&\mathbb{P}(\alpha(n)>a(\log n)^2)\notag\\
&\geq  \sum_{a(\log n)^2<m<\bar{a}(\log n)^2}\mathbb{P}(\alpha(n)=m)\notag\\
&\geq \frac{1}{\binom{\lfloor \bar{a}(\log n)^2\rfloor}{k'}} \sum_{a(\log n)^2<m<\bar{a}(\log n)^2}\mathbb{P}(\alpha(n)=m)\binom{m}{k'}\notag\\
	&= \frac{1}{\binom{\lfloor \bar{a}(\log n)^2\rfloor}{k'}}\left[\mathbb{E}\alpha_{k'}(n)-\sum_{m\geq \bar{a}(\log n)^2}\mathbb{P}(\alpha(n)=m)\binom{m}{k'}-\sum_{m\leq a(\log n)^2}\mathbb{P}(\alpha(n)=m)\binom{m}{k'}\right]\notag\\
	&\geq \frac{1}{3}\frac{\mathbb{E}\alpha_{k'}(n)}{\binom{\lfloor \bar{a}(\log n)^2\rfloor}{k'}}.
\end{align}

Now we focus on the quantity
$\frac{\mathbb{E}\alpha_{k'}(n)}{\binom{\lfloor \bar{a}(\log n)^2\rfloor}{k'}}$.   By  Lemma \ref{lem: main lem} and
properties of  the Gamma function,
there exists $n_2(\delta)$ such that for all $n>n_2(\delta)$,
\begin{align}\label{eq: alpha k' lower01}
&\frac{\mathbb{E}\alpha_{k'}(n)}{\binom{\lfloor \bar{a}(\log n)^2\rfloor}{k'}}\notag\\
&\geq [\frac12(1-\delta)\log n]^{k'}\frac{\Gamma(k'+1)\Gamma(\lfloor \bar{a}(\log n)^2\rfloor-k'+1)}{\Gamma(\lfloor \bar{a}(\log n)^2\rfloor+1)}\notag\\
&\geq [\frac12(1-\delta)\log n]^{d(1+\delta)\log n-1}\frac{\Gamma(d(1+\delta)\log n)\Gamma(\bar{a}(\log n)^2-d(1+\delta)\log n)}{\Gamma(\bar{a}(\log n)^2+1)}\notag\\
&\geq [\frac12(1-\delta)\log n]^{d(1+\delta)\log n-1}\left[d(1+\delta)\log n\right]^{-1}\left[\bar{a}(\log n)^2-d(1+\delta)\log n\right]^{-1}\notag\\
&\quad\cdot \frac{\Gamma(d(1+\delta)\log n+1)\Gamma(\bar{a}(\log n)^2-d(1+\delta)\log n+1)}{\Gamma(\bar{a}(\log n)^2+1)}\notag\\
&\geq C[\frac12(1-\delta)\log n]^{d(1+\delta)\log n}(\log n)^{-4}\notag\\
&\quad\cdot\frac{(d(1+\delta)\log n)^{d(1+\delta)\log n+1/2}(\bar{a}(\log n)^2-d(1+\delta)\log n)^{\bar{a}(\log n)^2-d(1+\delta)\log n+1/2}}{(\bar{a}(\log n)^2)^{\bar{a}(\log n)^2+1/2}}\notag\\
&\geq C(\log n)^{-7/2}\left(\frac{1-\delta}{1+2\delta}\right)^{d(1+\delta)\log n}\left(1-\frac{d(1+\delta)}{\bar{a}\log n}\right)^{\bar{a}(\log n)^2-d(1+\delta)\log n+1/2}.
\end{align}
By Taylor's expansion, we have
\begin{align*}
&\left(1-\frac{d(1+\delta)}{\bar{a}\log n}\right)^{\bar{a}(\log n)^2-d(1+\delta)\log n+1/2}\\
&=\exp\left\{\left[\bar{a}(\log n)^2-d(1+\delta)\log n+1/2\right]\log\left(1-\frac{d(1+\delta)}{\bar{a}\log n}\right)\right\}\\
	&=\exp\left\{\left[\bar{a}(\log n)^2-d(1+\delta)\log n+1/2\right]\left[-\frac{d(1+\delta)}{\bar{a}\log n}+O((\log n)^{-2})\right]\right\}\\
	&=\exp\left\{-d(1+\delta)\log n+O(1)\right\}
\end{align*}
and
\begin{align*}
	\left(\frac{1-\delta}{1+2\delta}\right)^{d(1+\delta)\log n}=&\exp\left\{d(1+\delta)[\log(1-\delta)-\log(1+2\delta)]\log n\right\}\\
	=&\exp\left\{d(1+\delta)(-3\delta+O(\delta^2))\log n\right\}\\
=&\exp\left\{d(-3\delta+O(\delta^2))\log n\right\}.
\end{align*}
Combining the two displays above with \eqref{eq: alpha k' lower01}, we get that for any $\varepsilon>0$,
\begin{align*}
	\frac{\mathbb{E}\alpha_{k'}(n)}{\binom{\lfloor \bar{a}(\log n)^2\rfloor}{k'}}\geq& C(\log n)^{-7/2}\cdot\exp\left\{d(-3\delta+O(\delta^2))\log n\right\}\cdot\exp\left\{-d(1+\delta)\log n\right\}\\
	=&n^{-(d+\varepsilon)}\cdot\exp\left\{C-\frac{7}{2}\log(\log n)+(\varepsilon-4d\delta+O(\delta^2))\log n\right\}.
\end{align*}
For any $\varepsilon>0$, there exists $\delta_2(\varepsilon)>0$ such that
$\varepsilon-4d\delta+O(\delta^2)>0$ for all $\delta<\delta_2(\varepsilon)$. Thus
$\lim\limits_{n\rightarrow\infty}\exp\left\{C-\frac{7}{2}\log\log n+(\varepsilon-4d\delta+O(\delta^2))\log n\right\}=\infty$. Hence, there exists $N_2(\delta)>n_2(\delta)$ such that for all $n>N_2(\delta)$,
\begin{equation}\label{eq: E alpha lower bound}
\frac{\mathbb{E}\alpha_{k'}(n)}{\binom{\lfloor \bar{a}(\log n)^2\rfloor}{k'}}>3n^{-(d+\varepsilon)}=3n^{-2a-\varepsilon},
\end{equation}
which together with (\ref{eq: alpha n lower}) implies that (\ref{Step-2-a}) holds.\hfill\fbox

\begin{rem}\label{rem: alpha positive}
Lemma \ref{lem: main lem}, which gives upper and lower bounds for $\mathbb{E}\alpha_k(n)$, played a key role in the proof of Theorem \ref{thm: inequality of alpha(n)}.
By \eqref{eq: alpha positive and negative inequ} we know that $\mathbb{E}\alpha^+_k(n)$
similar upper and lower bounds.
So by following the proof of Theorem \ref{thm: inequality of alpha(n)}, we can get
the conclusion of Theorem \ref{thm: inequality of alpha(n)} holds with $\alpha(n)$ replaced by $\alpha^+(n)$,
i.e. for all $a>0$ and $\varepsilon>0$, there exists an $N_0=N_0(a,\varepsilon)$ such that for all $n>N_0$,
$$
n^{-2a-\varepsilon}<\mathbb{P}\left(\alpha^+(n)>a(\log n)^2\right)<n^{-2a+\varepsilon}.
$$
\end{rem}

\subsection{Proof of Theorem \ref{thm: limitation of alpha(n)}}

	{\bf Step 1.}
	First we deal with the upper bound of $\limsup_{n\rightarrow\infty}\frac{\alpha(n)}{(\log n)^2}$.
	
	By Theorem \ref{thm: inequality of alpha(n)}, for any $\varepsilon>0$, there exists a positive integer $n_0$ such that for all $n>n_0$,
$$
\mathbb{P}\left(\alpha(n)>(\frac12+\varepsilon)(\log  n)^2\right)<n^{-(1+2\varepsilon)+\varepsilon}=n^{-1-\varepsilon}.
$$
It follows that
$$
\sum^{\infty}_{n=1}\mathbb{P}\left(\alpha(n)>(\frac12+\varepsilon)(\log  n)^2\right)\leq \sum^{n_0}_{n=1}\mathbb{P}\left(\alpha(n)>(\frac12+\varepsilon)(\log n)^2 \right)+\sum^{\infty}_{n=n_0+1}n^{-1-\varepsilon}<\infty.
$$
Thus by the Borel-Cantelli lemma, we have
$$
\mathbb{P}\left(\limsup\limits_{n\rightarrow\infty}\frac{\alpha(n)}{(\log n)^2}\leq \frac12+\varepsilon\right)\geq \mathbb{P}\left(\bigcup^{\infty}_{k=1}\bigcap^{\infty}_{n=k}\left\{\alpha(n)\leq (\frac12+\varepsilon)(\log n)^2 \right\}\right)=1.
$$
Hence
\begin{equation}\label{eq: alpha n upper bound}
\mathbb{P}\left(\limsup\limits_{n\rightarrow\infty}\frac{\alpha(n)}{(\log n)^2}\leq \frac12\right)=\lim\limits_{\varepsilon\rightarrow 0^+}\mathbb{P}\left(\limsup\limits_{n\rightarrow\infty}\frac{\alpha(n)}{(\log n)^2}\leq \frac12+\varepsilon\right)=1.
	\end{equation}

{\bf Step 2.}
In this step, we deal with the lower bound of  $\limsup_{n\rightarrow\infty}\frac{\alpha(n)}{(\log n)^2}$.

Recall that $\mathcal{A}^+_n$ is defined in \eqref{eq: defn positive and negative edge}.
For $k\geq 1$,  define
$$
\mathcal{A}^{+}(\sigma_{k^2}, \sigma_{k^2}+k):=\{z\geq k^2, \,\,z\in\mathcal{A}^+_{\sigma_{k^2}+k}\},
$$
where $\sigma_{k^2}:=\inf\{n\geq 0: S_n=k^2\}$.  Then we have
\begin{align*}
\mathcal{A}^{+}(\sigma_{k^2}, \sigma_{k^2}+k)&=\{z\geq k^2: \exists! t\in [\sigma_{k^2},\sigma_{k^2}+k)\ s.t.\ S_t=z, S_{t+1}=z+1\}\\
&\in\sigma(X_{\sigma_{k^2}+1}, X_{\sigma_{k^2}+2}, \dots, X_{\sigma_{k^2}+k}).
\end{align*}
Since $\sigma_{(k+1)^2}-\sigma_{k^2}\geq 2k+1$, we get that $\{\mathcal{A}^{+}(\sigma_{k^2}, \sigma_{k^2}+k), k\geq 1\}$ are independent.

We define $\tilde{S}^{\sigma_{k^2}}_t:=S_{\sigma_{k^2}+t}-S_{\sigma_{k^2}}, 0\leq t\leq k$. Then $\{\tilde{S}^{\sigma_{k^2}}_t\}_{0\leq t\leq k}$ is a simple symmtric random walk with $\tilde{S}^{\sigma_{k^2}}_0=0$. We denote $\tilde{\mathcal{A}}^{\sigma_{k^2},+}(k)$ the counterpart of $\mathcal{A}^+_k$ in \eqref{eq: defn positive and negative edge} for  the random walk $\{\tilde{S}^{\sigma_{k^2}}_t\}_{0\leq t\leq k}$. Then we know that $\#\mathcal{A}^{+}(\sigma_{k^2}, \sigma_{k^2}+k)$ and $\#\tilde{\mathcal{A}}^{\sigma_{k^2},+}(k)$ have the same distribution.   Remark \ref{rem: alpha positive} tells us that  Theorem \ref{thm: inequality of alpha(n)} also holds for $\#\tilde{\mathcal{A}}^{\sigma_{k^2},+}(k)$. Hence, for all $\varepsilon\in (0, \frac12)$, we have
\begin{align*}
&\sum^{\infty}_{k=1}\mathbb{P}\left(\#\mathcal{A}^{+}(\sigma_{k^2}, \sigma_{k^2}+k)>(\frac12-\varepsilon)(\log k)^2\right)\\
&=\sum^{\infty}_{k=1}\mathbb{P}\left(\#\tilde{\mathcal{A}}^{\sigma_{k^2},+}(k)>(\frac12-\varepsilon)(\log k)^2\right)\\
&\geq \sum^{k_0}_{k=1}\mathbb{P}\left(\#\tilde{\mathcal{A}}^{\sigma_{k^2},+}(k)>(\frac12-\varepsilon)(\log k)^2 \right)+\sum^{\infty}_{k=k_0+1}k^{-2(\frac{1}{2}-\varepsilon)-\varepsilon}
=\infty.
\end{align*}
Then,  by the Borel-Cantelli lemma again, we get
$$
\mathbb{P}\left(\#\mathcal{A}^{+}(\sigma_{k^2}, \sigma_{k^2}+k)>(\frac12-\varepsilon)(\log k)^2, i.o.\right)=1,
$$
which together with the fact that $\#\mathcal{A}^{+}(\sigma_{k^2}, \sigma_{k^2}+k)\leq \alpha^+(\sigma_{k^2}+k)$ implies that
$$
\mathbb{P}\left(\alpha^+(\sigma_{k^2}+k)>(\frac12-\varepsilon)(\log k)^2, i.o.\right)\geq \mathbb{P}(\#\mathcal{A}^{+}(\sigma_{k^2}, \sigma_{k^2}+k)>(\frac12-\varepsilon)(\log k)^2, i.o.)=1.
$$
Then by Lemma~\ref{lem: stopping time} we have that for any $q>2$,
\begin{equation}\label{eq: a.e.01}
	\mathbb{P}\left(\left\{\alpha^+(\sigma_{k^2}+k)>(\frac12-\varepsilon)(\log k)^2, i.o.\right\}\cap\left\{\lim_{n\rightarrow\infty}\frac{\sigma_n}{n^q}=0\right\}\right)=1.
\end{equation}

For any $\omega\in\{\alpha^+(\sigma_{k^2}+k)>(\frac12-\varepsilon)(\log k)^2, i.o.\}\cap\{\lim\limits_{n\rightarrow\infty}\frac{\sigma_n}{n^q}=0\}$, there exists $k_j(\omega)\rightarrow\infty$, as $j\rightarrow\infty$ such that,
for all $j\ge 1$, $\frac{\alpha^+(\sigma_{k^2_j}+k_j)}{(\log k_j)^2}>\frac12-\varepsilon$, $\sigma_{k^2_j}< \frac{1}{2}k_j^{2q}$ and $k_j<\frac{1}{2}k_j^{2q}$. Thus
\begin{align*}
\frac{\alpha(\sigma_{k^2_j}+k_j)}{[\log(\sigma_{k^2_j}+k_j)]^2}
&\geq  \frac{\alpha^+(\sigma_{k^2_j}+k_j)}{[\log(\sigma_{k^2_j}+k_j)]^2}
=\frac{\alpha^+(\sigma_{k^2_j}+k_j)}{(\log k_j)^2}\cdot \frac{(\log k_j)^2}{[\log(\sigma_{k^2_j}+k_j)]^2}\\
&>(\frac12-\varepsilon)\frac{(\log k_j)^2}{(\log k^{2q}_j)^2}
=\frac{1-2\varepsilon}{8q^2}.
\end{align*}
Hence, we have
$$
\limsup\limits_{n\rightarrow\infty}\frac{\alpha(n)(\omega)}{(\log n)^2}\geq \limsup\limits_{j\rightarrow\infty}\frac{\alpha(\sigma_{k^2_j}+k_j)}
{[\log(\sigma_{k^2_j}+k_j)]^2}(\omega)\geq \frac{1-2\varepsilon}{8q^2}.
$$
So by  (\ref{eq: a.e.01}), we have
$$
\mathbb{P}\left(\limsup\limits_{n\rightarrow\infty}\frac{\alpha(n)}{(\log n)^2}\geq \frac{1-2\varepsilon}{8q^2}\right)=1.
$$
Thus we have
\begin{equation*}\label{eq: alpha n lower bound}
	\mathbb{P}\left(\limsup_{n\rightarrow\infty}\frac{\alpha(n)}{(\log n)^2}\geq \frac{1}{8q^2}\right)=\lim\limits_{\varepsilon\rightarrow 0^+}\mathbb{P}\left(\limsup\limits_{n\rightarrow\infty}\frac{\alpha(n)}{(\log n)^2}\geq \frac{1-2\varepsilon}{8q^2}\right)=1.
\end{equation*}

Since $q>2$ is arbitrary, we have
\begin{equation}\label{eq: alpha n lower bound02}
	\mathbb{P}\left(\limsup_{n\rightarrow\infty}\frac{\alpha(n)}{(\log n)^2}> \frac{1}{32}\right)=\lim_{q\to 2^+}\mathbb{P}\left(\limsup_{n\rightarrow\infty}\frac{\alpha(n)}{(\log n)^2}\geq \frac{1}{8q^2}\right)=1.
\end{equation}

Hence, by  \eqref{eq: alpha n upper bound} we get
\begin{equation}\label{eq: alpha n bound}
	\mathbb{P}\left(\frac{1}{32}< \limsup\limits_{n\rightarrow\infty}\frac{\alpha(n)}{(\log n)^2}\leq \frac12\right)=1.
\end{equation}
Hence, by Lemma~\ref{lem: limit} and (\ref{eq: alpha n bound}),  we know that there exists a constant $C\in \left(\frac{1}{32}, \frac12\right]$ such that
$$
\mathbb{P}\left(\limsup_{n\rightarrow\infty}\frac{\alpha(n)}{(\log n)^2}=C\right)=1.
$$
The proof is complete. \hfill\fbox

\begin{rem}\label{rem-3.6}

Recently,  Feng and Hao \cite{FH25}
improved the result of \cite{Ma88} and proved that
$$
\limsup_{n\to\infty}\frac{f_1(n)}{(\log n)^2}=\frac{1}{16}  
$$
almost surely, where $f_1(n)$ is the number of rarely-visited sites up to time $n$ defined in \eqref{e:raresites}. We
believe
that the constant $C$ in Theorem \ref{thm: limitation of alpha(n)}
is  also $1/16$.
\end{rem}

\begin{rem}\label{rem-3.7}
The basic idea for our proofs of Theorems 1.2 and 1.3 comes from Major \cite{Ma88}.
The main difference is between Lemma \ref{lem: main lem} and its counterpart in  \cite{Ma88}.
The proof of Lemma \ref{lem: main lem} is more complicated than the proof of  its counterpart in  \cite{Ma88}. We have to use the 5 events $C_1(t), C_2(r,t), C_2(t), D_1(t)$ and $D_2(r,t)$ defined before Lemma \ref{prop: events limit} to prove Lemma \ref{lem: main lem}, while in \cite{Ma88}, only the following 3 events are needed:
\begin{align*}
\tilde{C}_1(r,t)&=\{\omega: S_r(\omega)<S_l(\omega)<S_t(\omega)\ \mbox{for all}\ f<l<t\},\\
\tilde{D}_1(j)&=\{\omega: S_l(\omega)<S_j(\omega), \mbox{for all}\ 0\leq l<j\},\\
\tilde{D}_2(j)&=\tilde{D}_2(j,n)=\{\omega: S_l(\omega)>S_j(\omega)\ \mbox{for all}\ j<l\leq n\}.
\end{align*}

In our proof of Theorem \ref{thm: limitation of alpha(n)}, by Lemma~\ref{lem: stopping time}, we can get the constant $C\in(\frac{1}{32},\frac{1}{2}]$. However,  in the proof of  Major \cite[Theorem 1]{Ma88}, the auther used
the fact $\mathbb{P}(\lim_{n\to+\infty}\frac{\sigma_n}{n^4}=0)=1$, which leads a larger range of the constant $C$.

\end{rem}

\bigskip

{ \noindent {\bf\large Appendix: Proof of Lemma~\ref{lem: limit} and Lemma \ref{lem: stopping time}}\quad

\begin{proof}[\bf Proof of Lemma \ref{lem: limit}.]
Let $\alpha'(n)$ denote the number of edges visited only once by the random walk $\{S_k\}_{k\geq 0}$ during the time $[\sqrt{f(n)}, n]$. Then $|\alpha(n)-\alpha'(n)|\leq 2\sqrt{f(n)}$. Hence,
\begin{equation}\label{eq: alpha n and alpha'n}
\mathbb{P}\left(\limsup_{n\rightarrow+\infty}\frac{\alpha(n)}{f(n)}=\limsup_{n\rightarrow+\infty}\frac{\alpha'(n)}{f(n)}\right)=1.
\end{equation}

For any $c\in[0,+\infty)$, $\{\limsup_{n\rightarrow+\infty}\frac{\alpha'(n)}{f(n)}\geq c\}$ is a tail event.
Hence, by  Kolmogorov’s 0-1 law,
	$$\mathbb{P}(\limsup_{n\rightarrow+\infty}\frac{\alpha'(n)}{f(n)}\geq c)=1\,\,\text{or}\,\, 0, \,\,\text{for all} \,\,c\in[0,+\infty).$$
Notice that $\mathbb{P}(\limsup_{n\rightarrow+\infty}\frac{\alpha'(n)}{f(n)}\geq0)=1$ and $\{\limsup_{n\rightarrow+\infty}\frac{\alpha'(n)}{f(n)}\geq c\}$ decreases as $c$ increases. Let
	$$c^*:=\sup\{c\geq 0: \mathbb{P}(\limsup_{n\rightarrow+\infty}\frac{\alpha'(n)}{f(n)}\geq c)=1\}.$$
	
	If $0\leq c^*<+\infty$,
		let $c_m\downarrow c^*$ as $m\rightarrow+\infty$ and get
	$$\mathbb{P}(\limsup_{n\rightarrow+\infty}\frac{\alpha'(n)}{f(n)}>c^*)=\mathbb{P}(\lim_{m\rightarrow+\infty}\{\limsup_{n\rightarrow+\infty}\frac{\alpha'(n)}{f(n)}\geq c_m\})=\lim_{m\rightarrow+\infty}\mathbb{P}(\limsup_{n\rightarrow+\infty}\frac{\alpha'(n)}{f(n)}\geq c_m)=0.$$
	Hence,
	\begin{equation}\label{eq: limsup01}
		\mathbb{P}(\limsup_{n\rightarrow+\infty}\frac{\alpha'(n)}{f(n)}\leq c^*)=1,\quad 0\leq c^*<+\infty.
	\end{equation}

	In particular, if $c^*=0$, we have
\begin{equation}\label{eq: limisup02}
\mathbb{P}(\limsup_{n\rightarrow+\infty}\frac{\alpha'(n)}{f(n)}=0)=1.
\end{equation}

	If $0<c^*\leq+\infty$, let $c_m\uparrow c^*$ as $m\rightarrow+\infty$.
	Then
	$$\mathbb{P}(\limsup_{n\rightarrow+\infty}\frac{\alpha'(n)}{f(n)}\geq c^*)=\mathbb{P}(\lim_{m\rightarrow+\infty}\{\limsup_{n\rightarrow+\infty}\frac{\alpha'(n)}{f(n)}\geq c_m\})=\lim_{m\rightarrow+\infty}\mathbb{P}(\limsup_{n\rightarrow+\infty}\frac{\alpha'(n)}{f(n)}\geq c_m)=1.$$
	Hence, by \eqref{eq: limsup01}, we know that
\begin{equation}\label{eq: limsup03}
\mathbb{P}(\limsup_{n\rightarrow+\infty}\frac{\alpha'(n)}{f(n)}=c^*)=1, \quad 0<c^*\leq +\infty.
\end{equation}

	Therefore, by \eqref{eq: alpha n and alpha'n}, \eqref{eq: limisup02} and \eqref{eq: limsup03} we have that
	$$\mathbb{P}(\limsup_{n\rightarrow+\infty}\frac{\alpha(n)}{f(n)}=c^*)=
	 \mathbb{P}(\limsup_{n\rightarrow+\infty}\frac{\alpha'(n)}{f(n)}=c^*)=1.
	$$
\end{proof}

\begin{proof}[\bf Proof of Lemma \ref{lem: stopping time}.]
For any $q>2$, take $r>\frac{4}{q-2}+2>2$. Then
\begin{equation}\label{eq: r and q}
  \frac{2r}{r-2}=2+\frac{4}{r-2}<q.
\end{equation}
Let $M_n^+:=\max_{0\leq k\leq n}S_k$.
Since $\sum^{\infty}_{n=1}n^{-(1+r)}<+\infty$, by
\cite[Definition 4  on p. 34 and Theorem of Hirsch on p. 39]{Re90},
we get that  for almost every $\omega\in \Omega$, there exists $N(\omega)>0$
such that for all $n>N(\omega)$,
\[n^{-\frac{1}{2}}M_n^+(\omega)>n^{-\frac{1}{r}},\]
i.e.
\begin{equation}\label{eq: Mn and r}
  M_n^{+}(\omega)>n^{\frac{1}{2}-\frac{1}{r}}>1.
\end{equation}
For all $n>N^{\frac{1}{2}-\frac{1}{r}}(\omega)$, we can get $\sigma_n(\omega)<n^{\frac{2r}{r-2}}$.
Indeed, if $\sigma_n(\omega)\geq n^{\frac{2r}{r-2}}$, then
\[\sigma_n(\omega)\geq n^{\frac{2r}{r-2}}>\left(N^{\frac{1}{2}-\frac{1}{r}}(\omega)\right)^{\frac{2r}{r-2}}=N(\omega).\]
Thus by \eqref{eq: Mn and r} we get
\[M^+_{\sigma_n}(\omega)>\sigma_n^{\frac{1}{2}-\frac{1}{r}}\geq \left(n^{\frac{2r}{r-2}}\right)^{\frac{1}{2}-\frac{1}{r}}=n,\]
which contradicts to $M^+_{\sigma_n}(\omega)=n$. Hence $\sigma_n(\omega)<n^{\frac{2r}{r-2}}$. Therefore, by \eqref{eq: r and q} we have
\[0\leq \limsup_{n\to\infty}\frac{\sigma_n(\omega)}{n^q}\leq \limsup_{n\to\infty}\frac{n^{\frac{2r}{r-2}}}{n^q}\to 0,\]
which means
\[\mathbb{P}(\lim_{n\to\infty}\frac{\sigma_n}{n^q}=0)=1.\]
\end{proof}
}

{ \noindent {\bf\large Acknowledgments}\quad
We thank the referees for helpful comments and suggesting, particularly for the suggestion which leads to the current  improved  Theorem \ref{thm: limitation of alpha(n)}. We also hank Yinshan Chang and Longmin Wang for their comments and discussions.
This work was supported by the National Natural Science Foundation of China (12171335, 12001389, 12471139, 12071011), the Simons Foundation (\#960480) and the Science Development Project of Sichuan University (2020SCUNL201).\\

{ \noindent {\bf\large  Declarations}\quad

{\bf Author Contributions:} All authors contribute equally.

{\bf Data Availability:} No datasets were generated or analysed during the current study.

{\bf Competing interests:}  The authors declare no competing interests.

{\bf Ethical approval:} Not applicable.

{\bf Clinical trial number:} Not applicable.

\end{document}